\documentclass{nzjm}
\usepackage{amssymb,mathrsfs}
\usepackage{amsmath}
\usepackage[normalem]{ulem}
\usepackage{csquotes}
\usepackage{hyperref}
\usepackage{amssymb,amstext,titletoc,fancyhdr,color,xcolor,calc,graphicx}

% The details below will be filled in by the journal during production

% The details above will be filled in by the journal during production
\setcounter{page}{63}
\newtheorem{theorem}{Theorem}[section]
\newtheorem{lemma}[theorem]{Lemma}
\newtheorem{corollary}[theorem]{Corollary}

\theoremstyle{definition}
\newtheorem{definition}[theorem]{Definition}

\theoremstyle{remark}
\newtheorem{remark}[theorem]{Remark}

\numberwithin{equation}{section}

\def\Res{\mathop{\rm Res}}

\begin{document}

\title[$D$-module approach to Liouville's Theorem]{$D$-module approach to Liouville's Theorem for difference operators}
\author[K. H. Cheng]{Kam Hang Cheng}
\author[Y. M. Chiang]{Yik Man Chiang}
\author[A. Ching]{Avery Ching}
\date{20 November, 2021}
\thanks{The first author was partially supported by a fellowship award from the Research Grants Council of the Hong Kong Special Administrative Region, China (Project No. HKUST PDFS2021-6S04). The second author was partially supported by the Research Grants Council of Hong Kong (GRF600609).}
\keywords{D-modules, Residue maps, Liouville's theorem, difference operators}
\subjclass[2020]{Primary 30D10, 47B47; Secondary 12H05, 12H10, 13N10.}

\begin{abstract}
We establish analogues of Liouville's theorem in the complex function theory, with the differential operator replaced by various difference operators. This is done generally by the extraction of (formal) Taylor coefficients using a residue map which measures the obstruction having local ``anti-derivative". The residue map  is based on a Weyl algebra or $q$-Weyl algebra structure satisfied by each corresponding operator. This explains the different senses of ``boundedness" required by the respective analogues of Liouville's theorem in this article.
\end{abstract}

\maketitle

\section{Introduction}
Liouville's theorem from classical complex function theory continues to enjoy new generalizations and interpretations in a wide range of mathematical disciplines, see e.g. \cite{Chu_Vu_2003, Farina}. In this article, however, we return to its origin from a different perspective. 
It is well-known that the classical Liouville theorem  for an entire function $f$ can be derived from the vanishing of the coefficients in its Taylor series expansion
    \[
       f(x)= \sum_{k=0}^\infty a_k x^k
    \]
under the boundedness assumption. In view of the different versions of Little Picard's theorem on meromorphic functions with respect to various difference operators discovered in \cite{BHMK, Halburd-Korhonen,Cheng-Chiang, Chiang-Feng3}, it is natural to ask if there are corresponding Liouville-type theorems for these difference operators.  We formulate a general residue theory in a Weyl algebraic setting that allows us to consider Liouville's theorem for the (forward) difference operator $\Delta f(x)=f(x+1)-f(x)$. Our theory can also handle the backward difference operator $\nabla f(x)=f(x)-f(x-1)$, the $q$-difference operator $D_qf(x)=\big(f(x)-f(x/q)\big)/(x-x/q)$ (\cite{Annaby-Mansour}, \cite{Etingof}, \cite[p. 196]{Ismail}), and of course, the differential operator $Df(x)=f^\prime(x)$.

Here the Taylor series expansions are replaced by expansions in interpolation polynomials appropriate to their respective difference operators. In the case of difference operator $\Delta$, they are replaced by Newton series expansions
    \begin{equation}\label{E:newton}
        \sum_{k=0}^\infty a_k x(x-1)\cdots (x-k+1),
    \end{equation}
which converge in an appropriate half-plane or possibly the whole plane \cite[pp. 163-168]{Gelfond}.

What are the natural growth restrictions that lead to the vanishing of the Newton expansion coefficients?

It turns out that this problem is related, except in historical interests \cite{Koppelman,Laita,Synowiec}, to what appears to be a forgotten work of Boole \cite{Boole} amongst many British mathematicians \cite[p. 157]{Koppelman} about series solutions to differential and finite difference equations which is entirely formal. Boole noticed that there is a unified,  formal algebraic framework underlying these series solutions. Indeed Boole \cite{Boole} suggested to consider two operators $\rho$ and $\pi$ which satisfy the relation
    \begin{equation}
        \label{E:boole}
                  \rho f(\pi) u =\lambda f(\pi)  \rho u
    \end{equation}
for every function $u$ on which those operators act, and every polynomial $f(\cdot)$; and $\lambda$ is another operator acting on $f$, e.g. $\lambda f(\pi)=f(\phi(\pi))$ for some $\phi$. This important relation about the syntax of $\rho$ and $\pi$ is generally being regarded as a prelude to Boole's foundational work in symbolic logic \cite{Koppelman, Laita, Synowiec, Mansour_Schork}. However, the relation \eqref{E:boole} can also be regarded, from today's viewpoint, as a kind of ``generalised commutator relation" involving the symbols $\rho$ and $\pi$.  Applying two different semantics to $\rho$ and $\pi$, Boole successfully solved, amongst others, for series solutions to confluent hypergeometric type differential equations \cite[p. 236, Ex. 1]{Boole} and difference equations under the same syntactic context. Boole's approach gives, to the best of authors' knowledge, the first unifying treatment for different types of series expansions mentioned above.

For different choices of the $\lambda$ that appears in \eqref{E:boole} (which ``quantifies" the relationship between $\rho$ and $\pi$) against their corresponding (e.g. differential/difference) operators, we provide a unifying modern approach using Weyl algebra and its modifications, including $q$-Weyl algebras. In this approach, when we work with the \textit{Weyl algebra} 
	\[
		\mathcal{A}=\mathbb{C}\langle X,\partial\rangle /\langle \partial X-X\partial-1\rangle,
	\]
	a usual entire function $g$ having Taylor series expansion
    \[
        g(x)=\sum_{k=0}^\infty a_kx^k
    \]
    is identified as an element
    \[
    	G=\sum_{k=0}^\infty a_kX^k
    \]
    in the left $\mathcal{A}$-module $\overline{\mathcal{A}/\mathcal{A}\partial}$ in an $X$-adic completion \cite{CCT}; and the derivative \linebreak $g'(x)=\sum_{k=1}^\infty a_kkx^{k-1}$ will then be correspondingly identified as the element $\partial G=\partial\sum_{k=0}^\infty a_kX^k=\sum_{k=1}^\infty ka_kX^{k-1}$ in $\overline{\mathcal{A}/\mathcal{A}\partial}$. Here the symbol $X$ need not be interpreted as the usual monomial, but instead, the pair of symbols $X$ and $\partial$ play respectively the roles of $\rho$ and $\pi$ in \eqref{E:boole}.  In the classical setting, the extraction of the Taylor coefficients $b_k$ in the proof of Liouville's theorem was done by Cauchy's contour integrals.  In the current setting, the extraction of the coefficients $b_k$ is done by measuring the obstruction to the existence of ``antiderivatives" of $G/X^{k+1}$ using the map $\mathrm{Res}$ in the exact sequence
        \[
            \overline{\mathcal{A}/\mathcal{A}\partial}
            \stackrel{\partial\times}{\longrightarrow}
            \overline{\mathcal{A}/\mathcal{A}\partial}
            \stackrel{\mathrm{Res}}{\longrightarrow}
            \mathbb{C}
            \longrightarrow 0.
        \]
The criteria of vanishing of the coefficients will then depend on the map $\mathrm{Res}$ above. In the classical setting, this map is presented as a Cauchy integral and the usual boundedness condition will suffice; in the setting that is appropriate for Newton series expansions, it turns out that one can present this map $\mathrm{Res}$ as a Barnes-type integral, so one needs a growth restriction along infinite vertical straight lines, and a ``boundedness" condition in direction parallel to the positive real axis, in order to make the coefficients vanish. Different choices of the maps $\mathrm{Res}$ with respect to their corresponding operators would yield different ``boundedness" conditions.

\medskip
This article is organized as follows. In \S\ref{S:main}, we first introduce the Weyl-algebraic framework of our theory, together with a realization of it in the context of difference operator. We start with some preliminary constructions with the Weyl algebra $\mathcal{A}$ in \S\ref{SS:Weyl}, which will lead to various kinds of series expansions for entire functions when different left $\mathcal{A}$-modules are considered. In \S\ref{SS:residues}, the construction will be extended by allowing $1/X$ to come into play, so that one can handle Laurent-type series expansions and talk about the notion of residues. In \S\ref{SS:DLiouville}, a difference operator analogue of Liouville's theorem will be proved. The machinery developed so far will help in extracting the coefficients in the series expansions. Following the same structure of \S\ref{S:main}, the $q$-difference operator will then be investigated in \S\ref{S:q}, and the classical situation regarding the differential operator will be handled briefly in \S\ref{S:classical}. Finally in \S\ref{S:discussion}, we will 
review our main results, give an application of our difference Liouville's theorem to strengthen a theorem by de Branges \cite{deBranges} about the periodicity of a Hilbert space of entire functions defined by him \cite{deBranges_1968}, and to
discuss on possible further generalizations of our theory in special functions.

\medskip
Throughout this article, we adopt the following notations:
\begin{enumerate}
	\item $\mathbb{C}^*$ denotes the punctured complex plane $\mathbb{C}\backslash\{0\}$,
	\item For each open set $U\subset\mathbb{C}$, $\mathcal{O}(U)$ denotes the space of all analytic functions defined on $U$.
	\item For each $r>0$ and each entire function $f$, the maximum modulus of $f$ on the circle of radius $r$ centered at $0$ is denoted by
	\[
		M(r,f)=\max_{|z|=r}{|f(z)|}.
	\]
\end{enumerate}

\section{Main results: Difference Liouville Theorem}
\label{S:main}
\subsection{The Weyl algebra}
\label{SS:Weyl}

We start by introducing the notion of the Weyl algebra. It is the following non-commutative algebra generated by two symbols which we call $X$ and $\partial$. It contains all the essential algebraic information of polynomial bases of various kinds.

\begin{definition}\label{D:weyl-alg}
The \textit{Weyl algebra} $\mathcal{A}$ is the (non-commutative) $\mathbb{C}$-algebra defined by
	\[
\mathcal{A}=\mathbb{C}\langle X,\partial\rangle/\langle \partial X-X\partial-1\rangle.
\]
\end{definition}
\bigskip

Fix $L\in\mathcal{A}$, and denote by $\mathcal{A}L$ the left ideal generated by $L$. Then the quotient
\[
\mathcal{A}/\mathcal{A}L
\]
is a left $\mathcal{A}$-module. In this article, we focus on the simplest case when $L=\partial$. Then the left $\mathcal{A}$-module $\mathcal{A}/\mathcal{A}\partial$ becomes the ``universal model" for the various spaces of analytic functions each of which is equipped with a specific operator. It is of course possible to study such a quotient with other choices of $L$, but we will leave this to future works.

\bigskip
In order to handle series expansions of entire functions, it is not enough to just consider the quotient $\mathcal{A}/\mathcal{A}\partial$, but to enlarge $\mathcal{A}$ so that the quotient above contains not only polynomials in $X$ but also power series in $X$.

\begin{definition}\label{D:AF}
Fix a $\mathbb{C}$-vector subspace $\mathbb{C}[X]\subset F\subset\mathbb{C}[[X]]$ of the space of formal power series in $X$.  We use the notation $\mathcal{A}_F$ to denote the ring extension of $\mathcal{A}$ generated by $F$ and $\partial$ as a $\mathbb{C}$-algebra, subject to the relation $\partial X-X\partial-1$. We suppress the emphasis of the dependence of $\mathcal{A}_F$ on $F$ in later usages when there is no ambiguity.
\end{definition}

As we will see later in \S\ref{S:classical}, when $\partial$ is interpreted as the usual differential operator $D$, the above set-up recovers the Maclaurin expansions (and hence the classical Liouville theorem). In fact, the above set-up is completely general, and a slight modification of the algebra $\mathcal{A}$ would allow us to handle the \textit{Jackson} $q$-difference operator \cite[p. 196]{Ismail}, to be considered in \S\ref{S:q}.

\subsubsection{Application to the difference operator $\Delta$} \label{SS:Difference}

Recall from \cite[pp. 163-168]{Gelfond} (see also \cite[p. 109]{Noerlund}) that (i) a Newton series \eqref{E:newton}
%$\sum_{k=0}^\infty{a_kx(x-1)\cdots(x-k+1)}$ 
converges uniformly on every compact subset of $\mathbb{C}$ if
\[
	\lim_{n\to\infty}\frac{\ln |\sum_{k=n}^\infty(-1)^k k!\,a_k|}{\ln n}=-\infty,
\]
and that (ii) an entire function $f$ admits a Newton series expansion if it is of exponential type at most $\ln 2$, i.e.
\[
	\limsup_{r\to\infty}\frac{\ln^+M(r,f)}{r}\le\ln 2.
\]
This motivates the following construction.
\bigskip

As in Definition \ref{D:AF}, we let $F\subset\mathbb{C}[[X]]$ be the subspace defined by
	\[
		F=\Big\{\sum_k a_kX^k:\ 
		\lim_{n\to\infty}\frac{\ln |\sum_{k=n}^\infty(-1)^k k!\,a_k|}
		{\ln n}=-\infty\Big\}
	\]
and let $\mathcal{A}_F$ be the $\mathbb{C}$-algebra generated by $F$ and $\partial$ subject to the relation \linebreak $\partial X-X\partial -1$. Then $\mathcal{A}_F$ contains $\mathcal{A}$ as a subalgebra, and from now on we denote $\mathcal{A}_F$ by just $\mathcal{A}$. If $H\subset\mathcal{O}(\mathbb{C})$ denotes the subspace consisting of entire functions of exponential type at most $\ln 2$, then $H$ is endowed with the structure of a left $\mathcal{A}$-module by
	\begin{equation}\label{E:difference-A}
\begin{array}{l}
(Xf)(t)=tf(t-1);\\
(\partial f)(t)=f(t+1)-f(t)\hspace{1cm}\mbox{for all }f\in H\mbox{ and all }t\in\mathbb{C}.
\end{array}
	\end{equation}
Fix any $1$-periodic entire function $p$ (i.e. $p(t+1)=p(t)$ for all $t$), then we obtain a left $\mathcal{A}$-linear map
\begin{equation}\label{E:evaluation-A}
N:\mathcal{A}/\mathcal{A}\partial\stackrel{\cdot p}{\longrightarrow}Hp.
\end{equation}
In particular, when $p$ is the constant function $1$, the series
\[
\Big[\sum_k a_kX^k\Big]\cdot 1=\sum_k a_kx(x-1)\cdots(x-k+1)
\]
becomes the Newton series expansion of the entire function $N[\sum_ka_kX^k]$. Clearly we obtain a more general expansion when $p$ is chosen to be a non-constant $1$-periodic function.
%\medskip
%
%\begin{remark}
%If $f$ is analytic just in the half-plane $\Re(x)>\alpha$ instead of an entire function, then we have the following result \cite[pp. 183-184]{Gelfond}. If for each given $\varepsilon >0$, there exists an $R>0$ such that $f$ satisfies
%%	\begin{equation}\label{E:growth_newton}
%	\[
%		|f(\alpha+re^{it})|< (1+r)^{\beta+\varepsilon} e^{rh(t)},
%	\]
%%	\end{equation}
%	whenever $r>R$, where
%	\[
%		h(t)=\cos\big(t\ln(2\cos t)\big)+t\sin t,\qquad
%		-\pi/2\le t\le \pi/2,
%	\]
%	then $f$ admits the Newton series expansion \eqref{E:newton} whose \textit{abscissa of convergence} (i.e. the $x$-coordinate for which the Newton series converges in the right half-plane of it) is less than $\max\{\alpha,\, \beta+1\}$. Hence we may modify the construction of the subspace $F$ above, and obtain the Newton series expansion for the corresponding class of functions accordingly.
%\end{remark}
\bigskip

\subsection{Residues} \label{SS:residues}

In this subsection we discuss the notion of residues. Recall that if $\mathcal{O}$ and $\Omega$ are the sheaves of analytic functions and one-forms on $\mathbb{C}^*$ respectively, then the classical residue is the map $\mathrm{Res}$ which fits into the exact sequence
\[
H^0(\mathbb{C}^\ast,\mathcal{O})\stackrel{d}{\to}H^0(\mathbb{C}^\ast,\Omega)\stackrel{\mathrm{Res}}{\longrightarrow}H^1(\mathbb{C}^\ast,\mathbb{C})\to 0,
\]
where the map $d$ is the exterior derivative, whose definition originates from the usual derivative $D$. In this article, however, the usual derivative $D$ is just a special interpretation of the symbol $\partial$ in the Weyl algebra, so the topological tool above would be lost if one attempts to interpret $\partial$ alternatively. Local cohomologies (see \cite{Hartshorne}) turns out to be the pure algebraic tool suitable for this purpose and $\mathrm{Res}$ is now a map which fits into the exact sequence
\[
H^1_0(\mathbb{C}[X])\stackrel{\partial\times}{\longrightarrow}H^1_0(\mathbb{C}[X])\stackrel{\mathrm{Res}}{\longrightarrow}H^2_0(\mathbb{C})\to 0.
\]
Here the local cohomology $H^1_0(\mathbb{C}[X])$ is the space of principal parts at $0$. Just for the narrow purpose in this article, the general theory of local cohomologies is not needed. The first task is to introduce the symbol $X^{-1}$ or $1/X$ so that the discussion of principal parts mentioned above is possible.
\medskip

\begin{definition}
Choose a $\mathbb{C}$-vector subspace $\mathbb{C}[X]\subset F\subset\mathbb{C}[[X]]$. We use the notation $\mathcal{A}_{0}$ to denote the $\mathbb{C}$-algebra generated by $F$, $\dfrac{1}{X}$ and $\partial$, subject to the relation $\partial X-X\partial-1$. The dependence on $F$ in the notation $\mathcal{A}_{0}$ is again suppressed for simplicity.
\end{definition}

In the rest of this section, a choice of the subspace $F$ is understood without further emphasis.
\medskip

\begin{lemma}
The cokernel of the map
\[
\mathcal{A}_{0}/\mathcal{A}_{0}\partial\stackrel{\partial\times}{\longrightarrow}\mathcal{A}_{0}/\mathcal{A}_{0}\partial
\]
has $\mathbb{C}$-dimension one and is spanned by $1/X$.
\end{lemma}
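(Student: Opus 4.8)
The plan is to reduce the statement to an elementary computation about formal differentiation of Laurent-type series, after making the module $\mathcal{A}_0/\mathcal{A}_0\partial$ completely explicit. First I would record the normal-form (PBW-type) description of $\mathcal{A}_0$: using $\partial X-X\partial=1$ together with its consequence $\partial X^{-1}-X^{-1}\partial=-X^{-2}$ (obtained by multiplying the desired identity on the right by $X$ and reducing), every element can be rewritten as a finite sum $\sum_{j\ge 0}r_j\partial^j$ with each $r_j$ in the commutative ``function part'' $R=\langle F,1/X\rangle$. The left ideal $\mathcal{A}_0\partial$ is then precisely the set of normal forms whose $\partial$-free term $r_0$ vanishes, since $A\partial=\sum_j r_j\partial^{j+1}$ never produces a $\partial^0$-term. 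Hence the $\mathbb{C}$-linear map $\sum_j r_j\partial^j\mapsto r_0$ identifies $\mathcal{A}_0/\mathcal{A}_0\partial$ with $R$, and in particular the coefficient of $X^{-1}$ becomes a well-defined functional on classes.

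Next I would transport $\partial\times$ to this model. Since $\partial r=r\partial+r'$ with $r\partial\in\mathcal{A}_0\partial$, left multiplication by $\partial$ descends to the formal derivative $d/dX$ on $R$; concretely $\partial\cdot X^m\equiv mX^{m-1}$ for every integer $m$ (note $\partial\cdot 1\equiv 0$, consistent with the factor $0$ at $m=0$). So the lemma becomes the assertion that $\mathrm{coker}(d/dX\colon R\to R)$ is one-dimensional, spanned by $1/X$. To pin down the image I would use two inclusions. For one direction, introduce the functional $\mathrm{Res}\colon R\to\mathbb{C}$ extracting the coefficient of $X^{-1}$ and observe that $\mathrm{Res}(r')=0$ for all $r$, because the $X^{-1}$-term of $r'=\sum_m m\,c_m X^{m-1}$ can only come from the $X^0$-term of $r$ and carries the factor $0$; thus $\mathrm{Im}(d/dX)\subseteq\ker(\mathrm{Res})$, and since $\mathrm{Res}(1/X)=1$ the class $[1/X]$ is nonzero in the cokernel. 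For the reverse inclusion I would antidifferentiate term by term: any $r$ with $\mathrm{Res}(r)=0$ has antiderivative $\sum_{m\ne -1}\tfrac{c_m}{m+1}X^{m+1}$, whose finitely many negative powers lie in $\mathbb{C}[1/X]$ and whose power-series part stays in $F$, so this antiderivative lies in $R$. Combining, $\ker(\mathrm{Res})=\mathrm{Im}(d/dX)$, and $\mathrm{Res}$ induces an isomorphism $\mathrm{coker}(d/dX)\cong\mathbb{C}$ with generator $[1/X]$.

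I expect the genuine work to lie in the structural first step and in the closure claim inside the reverse inclusion, rather than in the residue computation, which is immediate. Specifically, I would want to be careful that the normal form remains unique once both negative powers of $X$ and full power series from $F$ are admitted, so that $\mathrm{Res}$ is genuinely well-defined on the quotient; and I would need $R$ to be stable under antidifferentiation of its $X^{-1}$-free elements, which amounts to $F$ being closed under antiderivative. The latter is a direct check for the Newton-series space $F$, where replacing the coefficient $a_k$ by $a_{k-1}/k$ satisfies $k!\,a_k'=(k-1)!\,a_{k-1}$ and therefore merely shifts the index in the defining limit, leaving the condition intact. Everything else then follows formally from the identification $\mathcal{A}_0/\mathcal{A}_0\partial\cong R$.
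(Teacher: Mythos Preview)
Your proposal is correct and rests on the same core identity the paper uses, namely $\partial X^n \equiv nX^{n-1}\pmod{\mathcal{A}_0\partial}$ for all $n\in\mathbb{Z}$; the paper's own proof is a two-line sketch that records this formula by induction and observes that $X^{-1}$ therefore has no anti-$\partial$. Your write-up is considerably more thorough---explicitly identifying $\mathcal{A}_0/\mathcal{A}_0\partial$ with the Laurent-type ring $R$, introducing the residue functional to pin down $[1/X]\neq 0$, and checking the reverse inclusion by term-wise antidifferentiation together with the verification that the specific $F$ is closed under antiderivative---and that last point is a genuine issue the paper's sketch glosses over, so the extra care is warranted rather than superfluous.
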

\medskip

%\redblk{
%\begin{proof}
%One easily proves by induction that
%	\[
%		\partial X^n=nX^{n-1}\quad\mod\;\mathcal{A}_{0}\partial
%	\]
%for each $n\in\mathbb{Z}$.
%\end{proof}
%}
%\bigskip

\begin{proof} It is easy to establish by induction that the formula
	\[
		\partial X^n=nX^{n-1} 	\qquad \mod \mathcal{A}_0\partial
	\]holds for each  integer $n\in\mathbb{Z}$. In particular, %this formula together with a moment of thought show that
	$X^{-1}$ has no ``anti-$\partial$".
	% This completes the proof.
\end{proof}		
\bigskip

\begin{definition}
Let $M$ be a left $\mathcal{A}$-module. Then we denote
\[
M_0=\mathcal{A}_{0}\otimes_{\mathcal{A}} M.
\]
\end{definition}
\medskip

%We see immediately

\begin{corollary}
\label{Res1d}
Let $M$ be a left $\mathcal{A}$-module freely generated by a singleton over $F$. Then the cokernel of the map
\[
M_0\ \stackrel{\partial\times}{\longrightarrow}\ M_0
\]
is one dimensional.
\end{corollary}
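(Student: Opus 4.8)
The plan is to deduce the corollary from the preceding Lemma by base change, identifying $M_0$ with $\mathcal{A}_0/\mathcal{A}_0\partial$. Write $e$ for the single free generator of $M$ over $F$, so that $M=F\cdot e$ and $f\mapsto f\cdot e$ is a bijection $F\xrightarrow{\sim}M$. The first and central step is to show that the evaluation map
\[
	\mathcal{A}\longrightarrow M,\qquad a\longmapsto a\cdot e,
\]
is a surjection of left $\mathcal{A}$-modules with kernel exactly the left ideal $\mathcal{A}\partial$; equivalently $M\cong\mathcal{A}/\mathcal{A}\partial$. Surjectivity is clear from $M=F\cdot e$. For the kernel I would invoke the Weyl-algebra normal form, writing each $a\in\mathcal{A}$ uniquely as $a=\sum_{j\ge 0}b_j(X)\,\partial^j$ with $b_j\in F$; since the generator is $\partial$-flat (that is, $\partial\cdot e=0$, as for the canonical generator of $\mathcal{A}/\mathcal{A}\partial$), every term with $j\ge1$ kills $e$, whence $a\cdot e=b_0\cdot e$. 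Freeness over $F$ then gives $a\cdot e=0\iff b_0=0\iff a\in\mathcal{A}\partial$.

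Granting $M\cong\mathcal{A}/\mathcal{A}\partial$, I would next apply the right-exact functor $\mathcal{A}_0\otimes_{\mathcal{A}}(-)$ to the free presentation
\[
	\mathcal{A}\stackrel{\cdot\partial}{\longrightarrow}\mathcal{A}\longrightarrow\mathcal{A}/\mathcal{A}\partial\longrightarrow 0,
\]
in which $\cdot\partial$ denotes right multiplication by $\partial$ (a left $\mathcal{A}$-linear map with image $\mathcal{A}\partial$). Using the canonical isomorphism $\mathcal{A}_0\otimes_{\mathcal{A}}\mathcal{A}\cong\mathcal{A}_0$, under which right multiplication by $\partial$ again corresponds to right multiplication by $\partial$, right-exactness yields
\[
	\mathcal{A}_0\stackrel{\cdot\partial}{\longrightarrow}\mathcal{A}_0\longrightarrow M_0\longrightarrow 0,
\]
that is, a natural isomorphism $M_0\cong\mathcal{A}_0/\mathcal{A}_0\partial$ of left $\mathcal{A}_0$-modules. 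Only right-exactness of the tensor product is used here, so there is no need to establish flatness of $\mathcal{A}_0$ over $\mathcal{A}$.

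Under this identification the map $\partial\times\colon M_0\to M_0$ of the statement is precisely left multiplication by $\partial$ on $\mathcal{A}_0/\mathcal{A}_0\partial$, and the isomorphism, being left $\mathcal{A}_0$-linear, intertwines the two. The Lemma then applies verbatim and shows the cokernel is one-dimensional, spanned by the class of $1/X$; on the $M$-side this is the class of $(1/X)\otimes e$. Concretely, the same computation $\partial X^n\equiv nX^{n-1}\pmod{\mathcal{A}_0\partial}$ that proved the Lemma shows the image of $\partial\times$ to be the span of all $X^{m}e$ with $m\neq-1$, leaving exactly $X^{-1}e$ to span the cokernel.

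The step I expect to be the main obstacle is pinning down the kernel in the first paragraph---confirming that the annihilator of the generator is exactly $\mathcal{A}\partial$ and not larger. This is where the hypotheses of freeness over $F$ and $\partial$-flatness of the generator are both indispensable, and one must take care that passing from $M$ to the localized module $M_0$ neither enlarges this annihilator nor collapses the generator. Deferring the localization until after the presentation is fixed, and only then applying right-exactness, is what renders this delicate point transparent.
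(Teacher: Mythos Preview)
Your argument is correct and matches the paper's intent: the corollary is stated there without proof, the implicit reduction being exactly your identification $M_0\cong\mathcal{A}_0/\mathcal{A}_0\partial$ via right-exactness of $\mathcal{A}_0\otimes_{\mathcal{A}}(-)$, followed by the Lemma. Your caveat about needing $\partial e=0$ is apt---the paper tacitly reads ``freely generated by a singleton over $F$'' as shorthand for $M\cong\mathcal{A}/\mathcal{A}\partial$, which indeed holds in every instance it uses (the modules $Hp$ with $p$ periodic), so your flagged assumption is the intended one.
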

\medskip

We now come to define the key map.
%\medskip

\begin{definition}
Let $M$ be a left $\mathcal{A}$-module freely generated by a singleton over $F$. Then a choice of an isomorphism
\[
\Res:M_0/\partial M_0\to\mathbb{C}
\]
is called a \textit{residue} map. In particular, $\Res f=0$ if and only if $f=\partial g$ for some $g\in M_0$ (i.e., $f$ has an ``anti-$\partial$").
\end{definition}
%\medskip

Corollary~\ref{Res1d} implies that any two such choices of the residue map differ just by a non-zero complex scalar multiple.
\bigskip

We see immediately that the extraction of Taylor coefficients by the residue map defined above is in general explicitly described in the following theorem.
%\bigskip

\begin{theorem}[Cauchy integral formula]\label{T:CIFA}
Let $M$ be a left $\mathcal{A}$-module which is freely generated by a singleton $\{p\}$ over $F$. For each choice of residue map $\Res:M_0/\partial M_0\to\mathbb{C}$, there exists a scalar $c\in\mathbb{C}^*$ such that if $f\in M$ has the ``Taylor series expansion"
\[
f=\Big[\sum_ka_kX^k\Big]p,
\]
then
	\begin{equation}\label{E:residue_1}
	a_k=c\, \Res\Big(\dfrac{1}{X^{k+1}}f\Big), \hspace{1cm}\mbox{for every }k\in\mathbb{N}.
	\end{equation}
\end{theorem}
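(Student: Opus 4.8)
The plan is to make the residue map completely concrete as a coefficient-extraction functional, and then reduce \eqref{E:residue_1} to reading off a single Laurent coefficient. First I would record the shape of $M_0$. Since $M$ is freely generated by $\{p\}$ over $F$ with $\partial p=0$, tensoring gives $M_0=\mathcal{A}_0\otimes_{\mathcal{A}}M\cong\mathcal{A}_0/\mathcal{A}_0\partial$, with $p$ corresponding to the class of $1$. Consequently every element of $M_0$ is represented by a Laurent-type series $\big[\sum_m b_m X^m\big]p$ having only finitely many negative powers (its non-negative part lying in $F$), and each coefficient $b_m$ is well defined. In particular, for $f=\big[\sum_{k\ge0}a_kX^k\big]p\in M$ we get
\[
\frac{1}{X^{k+1}}f=\Big[\sum_{j\ge0}a_jX^{\,j-k-1}\Big]p\in M_0,
\]
an element whose coefficient of $X^{-1}$ is exactly $a_k$ (the summand $j=k$).

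Next I would pin down the residue map up to scalar. Define the $\mathbb{C}$-linear functional $\mathrm{res}_0\colon M_0\to\mathbb{C}$ by $\mathrm{res}_0\big(\big[\sum_m b_mX^m\big]p\big)=b_{-1}$, i.e. extraction of the coefficient of $X^{-1}$. The crucial observation is that $\mathrm{res}_0$ annihilates $\partial M_0$: for $g=\big[\sum_m c_mX^m\big]p$, the relation $\partial X^m=mX^{m-1}$ (valid for all $m\in\mathbb{Z}$, as in the Lemma above) gives $\partial g=\big[\sum_m mc_mX^{m-1}\big]p$, and the coefficient of $X^{-1}$ in $\partial g$ can only come from $m=0$, where the factor $m$ kills it; hence $\mathrm{res}_0(\partial g)=0$. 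Thus $\mathrm{res}_0$ descends to $M_0/\partial M_0$, and since $\mathrm{res}_0\big((1/X)p\big)=1\ne0$ while $M_0/\partial M_0$ is one dimensional by Corollary~\ref{Res1d}, the induced map $\overline{\mathrm{res}_0}$ is an isomorphism. Recalling that any two residue maps differ by a non-zero scalar, any chosen $\Res$ satisfies $\Res=\mu\,\overline{\mathrm{res}_0}$ for some $\mu\in\mathbb{C}^*$; I set $c=\mu^{-1}$.

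Combining the two computations then yields $\Res\big(X^{-(k+1)}f\big)=\mu\,\mathrm{res}_0\big(X^{-(k+1)}f\big)=\mu\,a_k$, which is precisely \eqref{E:residue_1} with $c=\mu^{-1}$, uniformly in $k$. The step I expect to be the genuine obstacle—and the reason for routing the argument through $\mathrm{res}_0$ rather than reducing $\sum_j a_jX^{\,j-k-1}$ term by term modulo $\partial M_0$—is the treatment of the infinite tail $\sum_{j>k}a_jX^{\,j-k-1}$. Each summand individually lies in $\partial M_0$, but deducing that the whole sum does would force its formal anti-$\partial$, namely $\sum_{j>k}\frac{a_j}{j-k}X^{\,j-k}$, to lie in $F$, a convergence matter tied to the specific definition of $F$. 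Working instead with the manifestly well-defined coefficient functional $\mathrm{res}_0$ sidesteps this entirely: one needs only that Laurent coefficients of elements of $M_0$ are well defined and that $\mathrm{res}_0$ kills $\partial M_0$, both of which are immediate from the normal form. The only routine point to verify along the way is that $X^{-(k+1)}f$ genuinely represents an element of $M_0$, which holds because it has only finitely many negative powers.
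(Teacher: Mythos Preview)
Your argument is correct, and in fact the paper gives no proof at all of this theorem: immediately before the statement it writes ``We see immediately that the extraction of Taylor coefficients by the residue map defined above is in general explicitly described in the following theorem,'' and then moves on. So the paper treats the result as a formal consequence of Corollary~\ref{Res1d} and the definition of $\Res$, leaving the reader to supply exactly the computation you wrote down.

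Your route through the explicit coefficient functional $\mathrm{res}_0$ is the natural way to make this precise, and the subtlety you flag is real. The ``immediate'' argument the paper presumably has in mind is: $X^{-(k+1)}f$ equals $a_k(1/X)p$ plus terms $X^m p$ with $m\neq-1$, each of which lies in $\partial M_0$; hence modulo $\partial M_0$ only $a_k(1/X)p$ survives. For the finitely many negative-power terms this is unproblematic, but for the infinite tail $\sum_{j>k}a_jX^{\,j-k-1}$ one would need the formal antiderivative $\sum_{j>k}\tfrac{a_j}{j-k}X^{\,j-k}$ to lie in $F$, a condition on $F$ that the paper never states (it holds for the concrete choices of $F$ used later, but is not part of the abstract hypotheses). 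Your device of passing through $\mathrm{res}_0$ bypasses this entirely, since you need only that $\partial$ acts on Laurent-type series by $X^m\mapsto mX^{m-1}$ and that the $X^{-1}$-coefficient of $\partial g$ therefore vanishes---no antiderivatives required. In that sense your write-up is more careful than the paper's own treatment.
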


\subsection{Liouville's theorem for the difference operator $\Delta$}
\label{SS:DLiouville}

We emphasise that the development in \S\ref{SS:Weyl}--\ref{SS:residues} about residues is completely general where the symbol ``$\partial$" is up to interpretation. We finally come to our first version of Liouville's theorem by considering the left $\mathcal{A}$-module effected by the interpretation \eqref{E:difference-A}, i.e. $(\partial f)(x)=\Delta f(x)=f(x+1)-f(x)$. 

\bigskip
The following classical result of Whittaker states that for any given entire function $f$, we have $\Res f=0$. That is, an ``anti-difference" always exists:

\begin{theorem}[\cite{whittaker}(pp. 22--24)] If $f$ is an entire function, then there exists an entire function $g$ (of the same order as $f$) such that 
	\[
		f(x)=\partial g(x)=g(x+1)-g(x).
	\]
\end{theorem}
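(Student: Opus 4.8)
The plan is to construct an entire anti-difference $g$ explicitly and then read off its order from the construction. Writing $E$ for the unit shift $Eg(x)=g(x+1)$ and $D=d/dx$, we have $E=e^{D}$, so that $\partial=\Delta=E-1=e^{D}-1$ and a formal anti-difference is $g=(e^{D}-1)^{-1}f$. The only obstruction to inverting $e^{D}-1$ is the set of zeros $w\in 2\pi i\mathbb{Z}$ of $e^{w}-1$. I would dissolve this obstruction by the Mittag--Leffler/partial-fraction expansion
\[
\frac{1}{e^{w}-1}=-\frac12+\frac1w+\sum_{k\neq 0}\frac{1}{w-2\pi i k},
\]
the sum taken symmetrically in $\pm k$, and then interpret each simple factor $(w-2\pi ik)^{-1}$ as the \emph{solution operator of a first-order linear ODE}, which never meets an obstruction on $\mathbb{C}$.

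Concretely, for each $k\in\mathbb{Z}$ let $g_k$ solve $(D-2\pi ik)g_k=f$, i.e.
\[
g_k(x)=e^{2\pi ikx}\int_{a_k}^{x}e^{-2\pi ikt}f(t)\,dt,
\]
which is entire for every choice of basepoint $a_k$ (equivalently, for every homogeneous term $C_ke^{2\pi ikx}$). I would set $g=-\tfrac12 f+\sum_{k\in\mathbb{Z}}g_k$ and pair the indices $+k$ and $-k$: one integration by parts gives $g_k=-f/(2\pi ik)+O(1/k^{2})$, the $O(1/k)$ terms cancel between $\pm k$, and a second integration by parts exhibits each pair as $O(1/k^{2})$, which is summable.

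To verify $\partial g=f$ I would shift termwise. Because $e^{2\pi ik}=1$, the shift collapses the integral to a single period, $\Delta g_k(x)=e^{2\pi ikx}\int_{x}^{x+1}e^{-2\pi ikt}f(t)\,dt$, so that $\Delta\big(\sum_k g_k\big)=\int_x^{x+1}f(t)\sum_k e^{2\pi ik(x-t)}\,dt$. Poisson summation (the Dirac comb) evaluates the inner sum on the half-open period to the endpoint average $\tfrac12\big(f(x)+f(x+1)\big)$, and combining with $\Delta(-\tfrac12 f)=-\tfrac12\big(f(x+1)-f(x)\big)$ collapses everything to $f(x)$; in a write-up I would replace this distributional step by the same conclusion drawn from the $O(1/k^{2})$ estimates together with Abel summation. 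For the order statement I would note that, after the integrations by parts, the growth in $x$ of each paired term is governed by $f$ and its derivatives rather than by the exponential factors, and the tail is absolutely summable, so $g$ inherits the order of $f$.

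The hard part is convergence and growth control uniformly on all of $\mathbb{C}$. The factor $e^{2\pi ikx}$ decays for $k>0$ only in the upper half-plane and grows in the lower one, so the basepoints $a_k$ (equivalently, the homogeneous constants $C_k$) must be chosen with a sign depending on $\operatorname{sgn}(k)$ and on the half-plane in question, so that no individual term blows up; making the resulting estimates strong enough to (i) sum the series, (ii) justify the termwise shift, and (iii) pin down the order is the real content. A cleaner but less elementary alternative, in the spirit of the Barnes-type integrals used later in this article, is to exhibit $g$ as a single Mellin--Barnes contour integral, or to deduce surjectivity of $\Delta$ on the Fr\'echet space $\mathcal{O}(\mathbb{C})$ abstractly from the injectivity of its transpose (multiplication by $e^{w}-1$ on Fourier--Borel transforms).
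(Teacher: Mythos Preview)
The paper does not supply a proof of this theorem at all: it is quoted as a classical result of Whittaker (with a page reference to \cite{whittaker}) and used only to motivate the claim that in the $\Delta$-setting every entire $f$ has $\Res f=0$. So there is nothing in the paper to compare your argument against.

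As to your proposal itself: the strategy you outline---invert $\Delta=e^{D}-1$ via the partial-fraction expansion of $1/(e^{w}-1)$ and realise each pole term as the solution operator of a first-order ODE---is essentially Guichard's 1887 construction, later sharpened by Hurwitz and Appell, and is indeed one of the classical routes to this theorem. What you have written, however, is an outline rather than a proof, and you are candid about where the real work lies. The paragraph beginning ``The hard part'' is exactly right: the exponential factors $e^{2\pi ikx}$ blow up in one half-plane or the other, so the basepoints $a_{k}$ must be chosen to depend on $\operatorname{sgn}(k\,\Im x)$, and reconciling these half-plane definitions into a single entire $g$ while keeping the $O(1/k^{2})$ estimates uniform is not a formality. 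Your treatment of the order claim is also only a gesture: saying that ``the growth in $x$ of each paired term is governed by $f$ and its derivatives'' is true in spirit but needs a quantitative bound on $\int^{x}e^{-2\pi ikt}f(t)\,dt$ that survives the summation in $k$; without it the order statement is unproved. Finally, the Poisson-summation verification of $\Delta g=f$ is purely formal as written; your promised replacement via Abel summation and the $O(1/k^{2})$ bounds is the right fix, but it has to be carried out.

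In short: the approach is sound and classical, but what you have is a roadmap with the three hardest turns (uniform convergence on $\mathbb{C}$, rigorous termwise shifting, order preservation) flagged but not executed.
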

%A similar result also holds if the given $f$ is meromorphic, see \cite[pp. 25--29]{whittaker}.
\medskip

\begin{theorem}[Difference Liouville\rq{s}  theorem] \label{T:liouville_1}
Let $f$ be an entire function. If there exists a $1$-periodic meromorphic function $p$ such that 
    \begin{enumerate}
        \item $\displaystyle\frac{f}{p}$ is entire and is of exponential type at most $\ln 2$, and
        \item there exists a sequence of positive real numbers $\{x_n\}$ diverging to $+\infty$ such that
			\[
				\frac{f}{p} (x_n+iy)=o(x_n)
			\]
        for every $y\in\mathbb{R}$ uniformly as $n\to\infty$,
    \end{enumerate}
    then
    $f$ is $1$-periodic.
\end{theorem}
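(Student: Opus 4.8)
The plan is to extract the Newton coefficients of $f$ by the residue map and to force all of them except the zeroth to vanish. Put $h=f/p$; by hypothesis~(1) this is entire of exponential type at most $\ln 2$, so it lies in $H$ and has a Newton expansion $h(t)=\sum_k a_k\,t(t-1)\cdots(t-k+1)$. In the module $M=Hp$ this says $f=\big[\sum_k a_kX^k\big]p$, the ``Taylor expansion'' demanded by Theorem~\ref{T:CIFA}. Using $p(t+1)=p(t)$ together with the interpretation \eqref{E:difference-A}, one computes $X^kp=t(t-1)\cdots(t-k+1)\,p(t)$, whence $\partial f$ is represented by $p(t)\,\Delta h(t)=p(t)\sum_{k\ge 1}k\,a_k\,t(t-1)\cdots(t-k+2)$. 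Since $p\not\equiv 0$ and the falling factorials are linearly independent, $f$ is $1$-periodic (equivalently $\partial f=0$) precisely when $a_k=0$ for every $k\ge 1$. So the whole theorem reduces to the vanishing of these coefficients.

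By Theorem~\ref{T:CIFA} I would then write $a_k=c\,\Res\!\big(X^{-(k+1)}f\big)$ for a fixed $c\in\mathbb{C}^*$ and all $k$. The core of the proof is to realise this residue analytically. By Corollary~\ref{Res1d} the space $M_0/\partial M_0$ is one dimensional and $\Res$ merely records the coefficient of $X^{-1}$; as announced in the introduction, this functional can be written as a Barnes-type contour integral of $h$ along a vertical line $\{\operatorname{Re} s=\sigma\}$, against a kernel built from Gamma functions whose poles at the integers reproduce the finite-difference formula for $a_k$. Here hypothesis~(1)---the exact exponential-type bound $\ln 2$ that, by Gelfond's criterion recalled in \S\ref{SS:Difference}, already guarantees the existence of the Newton expansion---is what secures absolute convergence of the integral along the vertical lines.

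Finally I would run the discrete analogue of the Cauchy estimate. The representation lets one move the vertical contour to $\{\operatorname{Re} s = x_n\}$; hypothesis~(2) then furnishes the uniform bound $h(x_n+iy)=o(x_n)$ along it, which, together with the $x_n^{-k}$-order decay supplied by the kernel for $k\ge 1$, drives the integral to $0$ as $n\to\infty$. Since $a_k$ is independent of $n$, this yields $a_k=0$ for all $k\ge 1$, so $h\equiv a_0$ and $f=a_0\,p$ is $1$-periodic. Conceptually this is the mechanism promised in the introduction: advancing the contour in the positive-real direction, where~(2) gives the requisite ``boundedness'', replaces the enlargement of the radius in Cauchy's formula, while the growth control on the vertical lines replaces the trivial bound on a circle.

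I expect the main obstacle to be the analytic realisation of $\Res$ and the control of the contour shift. One must produce an explicit Gamma-kernel whose residues reproduce the finite-difference coefficients and which decays fast enough on the vertical lines for the $\ln 2$ exponential-type bound to guarantee convergence; and one must track precisely what happens to the poles of that kernel as the contour advances to $\operatorname{Re} s = x_n$, ensuring that their net contribution to $a_k$ vanishes in the limit for $k\ge 1$ so that only the $o(x_n)$ term survives. This bookkeeping of residues against the two growth hypotheses is the delicate part; the algebraic reduction in the first step is, by contrast, routine once Theorem~\ref{T:CIFA} is in hand.
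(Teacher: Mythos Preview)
Your overall architecture matches the paper exactly: reduce to the vanishing of the Newton coefficients $a_k$ of $h=f/p$, invoke Theorem~\ref{T:CIFA} to write $a_k=c\,\Res(X^{-(k+1)}f)$, realise $\Res$ as an integral over a vertical line, then push the contour out to $\Re t=x_n$ and use hypothesis~(2). That part is sound.

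Where you diverge from the paper is in the analytic realisation of $\Res$, and here you have made the problem harder than it is. There is no Gamma-function kernel and no accumulation of residues to track. The paper's residue map on $M_0=(Hp)_0$ is simply
\[
\Res g=\int_{a-i\infty}^{a+i\infty}\frac{g(t)}{p(t)}\,e^{\mp\cos(2\pi t)}\,dt,
\]
with an \emph{entire, $1$-periodic} weight. Well-definedness (vanishing on $\partial M_0$) follows from a rectangular Cauchy argument, because both $p$ and the weight are $1$-periodic. The only poles in the integrand for $a_k$ come from $X^{-(k+1)}$, which under \eqref{E:difference-A} is the rational function $1/[(t+1)\cdots(t+k+1)]$ with poles at $-1,\dots,-(k+1)$; these lie to the left of every contour used, so shifting from $\Re t=a$ to $\Re t=x_{n}-k-1$ crosses nothing. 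The ``bookkeeping of residues'' you flag as the delicate part simply does not arise.

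One genuine subtlety you did not anticipate: the sign in $e^{\mp\cos(2\pi t)}$ is chosen after passing to a subsequence $\{x_{n_j}\}$ on which $\cos(2\pi x_{n_j})$ keeps a fixed sign. This guarantees $|e^{\mp\cos(2\pi t)}|\le 1$ along $\Re t=x_{n_j}-k-1$, which is what converts the integral into the clean estimate
\[
|a_k|\le \frac{|c|}{(x_{n_j}-k)^{k+1}}\sup_{\Re t=x_{n_j}}|h(t)|\xrightarrow[j\to\infty]{}0\qquad(k\ge 1).
\]
So your strategy is right, but the ``main obstacle'' you expected dissolves once you replace the conjectural Gamma kernel by the elementary periodic weight; what remains is only this sign/subsequence trick.
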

%\medskip

We note that the Phragm\'en-Lindel\"of theorem does not apply to functions described in Theorem \ref{T:liouville_1}. In fact, these two theorems are independent of each other.

\subsubsection{Proof of difference Liouville\rq{s} theorem}

Theorem \ref{T:liouville_1} will be established in several steps below. 

\subsubsection*{Construction of a residue map}
Fix a $1$-periodic function $p$, and let $M=Hp$ be a left $\mathcal{A}$-module where $H\subset\mathcal{O}(\mathbb{C})$ is the subspace of entire functions consisting of exponential growth of type $\ln 2$ at most as defined  previously in \S\ref{SS:Difference}. For each $a>0$, one defines the residue as the following map
	\[
		\begin{array}{crcl}	
	\Res : & M_0/\partial M_0&\longrightarrow&\mathbb{C}\\
	&	f&\mapsto&\displaystyle\int_{a-i\infty}^{a+i\infty} \frac{f(x)}{p(x)}e^{-\cos(2\pi x)}\, dx,
\end{array}
	\]
where the improper integral is understood as its Cauchy principal value. Here the \textit{weight function} $e^{-\cos(2\pi x)}$ guarantees the convergence of the improper integral. We first show that this residue map is well-defined.
\medskip

\begin{lemma}\label{T:uniqueness} 	
If $f=\partial g$ for some $g\in M_0$, then  $\Res f =0$.
\end{lemma}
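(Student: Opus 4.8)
My plan is to prove that the prescribed integral annihilates the image of $\partial$, which is exactly the assertion that $\Res$ descends to a well-defined map on $M_0/\partial M_0$. The single structural fact I would exploit is that \emph{both} factors of the weight are $1$-periodic: $p(x+1)=p(x)$ by hypothesis, and $\cos(2\pi(x+1))=\cos(2\pi x)$ forces $e^{-\cos(2\pi(x+1))}=e^{-\cos(2\pi x)}$. Hence $w(x):=e^{-\cos(2\pi x)}/p(x)$ is itself $1$-periodic. Writing $f=\partial g$ in the interpretation~\eqref{E:difference-A}, so that $f(x)=g(x+1)-g(x)$, and putting $F:=gw$, the periodicity $w(x)=w(x+1)$ gives
\[
\frac{f(x)}{p(x)}\,e^{-\cos(2\pi x)}=\big(g(x+1)-g(x)\big)w(x)=F(x+1)-F(x);
\]
that is, the integrand defining $\Res f$ is precisely the difference operator applied to $F$. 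Everything then reduces to showing that the integral of such a total difference along a vertical line vanishes.

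Next I would make the change of variable $x\mapsto x+1$ in the term carrying $F(x+1)$. This moves the contour $\mathrm{Re}\,x=a$ to $\mathrm{Re}\,x=a+1$ and rewrites the residue as a difference of two vertical-line integrals of the \emph{single} function $F$,
\[
\Res f=\int_{(a+1)-i\infty}^{(a+1)+i\infty}F(x)\,dx-\int_{a-i\infty}^{a+i\infty}F(x)\,dx .
\]
Each improper integral converges as a principal value: on a line $\mathrm{Re}\,x=\sigma$ one has $\bigl|e^{-\cos(2\pi x)}\bigr|=e^{-\cos(2\pi\sigma)\cosh(2\pi\,\mathrm{Im}\,x)}$, which (when $\cos(2\pi\sigma)>0$) decays doubly-exponentially and overwhelms the growth of $F$, using here that $g/p$ is of exponential type at most $\ln 2$. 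I would then compare the two line integrals by Cauchy's theorem on the truncated rectangle with vertical sides $\mathrm{Re}\,x=a,\;a+1$ and horizontal sides at $\mathrm{Im}\,x=\pm T$, letting $T\to\infty$; since $F=gw$ is holomorphic in the strip apart from the finitely many poles introduced by $1/X$, the enclosed contribution is a sum of residues that one verifies to vanish for admissible $g$.

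The main obstacle is the pair of horizontal segments at $\mathrm{Im}\,x=\pm T$. They necessarily cross the interior of the period strip, where $\cos(2\pi\,\mathrm{Re}\,x)<0$ and the weight therefore \emph{grows} doubly-exponentially in $T$, so these segments cannot be discarded by a crude bound; controlling them is the whole analytic content of the lemma, and it is precisely to handle them that the weight $e^{-\cos(2\pi x)}$ is chosen. I expect the estimate to hinge on the oscillatory factor $e^{\,i\sin(2\pi\,\mathrm{Re}\,x)\sinh(2\pi T)}$ carried by the weight along these segments together with the sharp type bound $\ln 2<\pi$ on $g/p$: the rapid oscillation should force the two horizontal contributions to cancel in the difference as $T\to\infty$, leaving the two vertical integrals equal and hence $\Res f=0$. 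Carrying out this cancellation rigorously—while tracking the principal-value truncation and any poles swept between the two lines—is where I expect the real difficulty to lie.
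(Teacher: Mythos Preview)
Your overall strategy is exactly the one the paper uses: exploit the $1$-periodicity of both $p$ and $e^{-\cos(2\pi x)}$, shift the $g(x+1)$ term to the line $\Re x=a+1$, and close up the two vertical segments into the boundary of the rectangle with corners $a\pm iT$, $a+1\pm iT$, invoking Cauchy's theorem. In that sense your proposal and the paper's proof coincide.

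Two points of comparison are worth making. First, your worry about ``finitely many poles introduced by $1/X$'' is unnecessary: under the interpretation \eqref{E:difference-A} one has $X^{-k}\cdot 1=1/\big((x+1)(x+2)\cdots(x+k)\big)$, so the only singularities of $g/p$ lie at the negative integers. Since $a>0$, the closed strip $a\le \Re x\le a+1$ is pole-free and the rectangle integral is simply zero; the paper uses this implicitly and never mentions residues inside the contour.

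Second, on the horizontal segments the paper is far terser than you are. It does not carry out an oscillatory-phase argument; it simply asserts that
\[
\lim_{T\to+\infty}\int_{[a\pm iT,\,a+1\pm iT]}\frac{g(x)}{p(x)}\,e^{-\cos(2\pi x)}\,dx=0
\]
and leaves it at that. Your observation that $\cos(2\pi\Re x)$ necessarily changes sign on any interval of length one, so that $\lvert e^{-\cos(2\pi x)}\rvert$ is doubly-exponentially large on part of each horizontal segment, is entirely correct and shows that the vanishing cannot come from a crude modulus bound. The paper does not supply the mechanism (your proposed use of the oscillatory factor $e^{\,i\sin(2\pi\Re x)\sinh(2\pi T)}$) that would actually justify this step; it treats the vanishing as understood. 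So you have matched the paper's argument and, in addition, put your finger precisely on the place where the paper's proof is only a sketch.
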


\medskip

\begin{proof} If $f(x)=g(x+1)-g(x)$, then
\begin{align*}
	\int_{a-i\infty}^{a+i\infty} \frac{f(x)}{p(x)}\, e^{-\cos(2\pi x)}\, dx 
	&= \lim_{T\to+\infty} \int_{a-iT}^{a+iT} \frac{f(x)}{p(x)}\, e^{-\cos(2\pi x)}\, dx\\
	&= \lim_{T\to+\infty} \int_{a-iT}^{a+iT} \frac{g(x+1)-g(x)}{p(x)}\, e^{-\cos(2\pi x)}\, dx \\
	&= \lim_{T\to+\infty} \left(\int_{a+1-iT}^{a+1+iT}-\int_{a-iT}^{a+iT}\right) \frac{g(x)}{p(x)}\, e^{-\cos(2\pi x)}\, dx\\
	&= \lim_{T\to+\infty}  \int_{\Gamma_T} \frac{g(x)}{p(x)}\, e^{-\cos(2\pi x)}\, dx \\
		&= 0
\end{align*}
by Cauchy's integral theorem, since the contour $\Gamma_T$ is the boundary of the rectangle with vertices $a+iT$, $a-iT$, $a+1-iT$ and $a+1+iT$, and the two improper (line) integrals
	\[
		\lim_{T\to+\infty}\int_{[a\pm iT,\, a+1\pm iT]} \frac{g(x)}{p(x)}e^{-\cos(2\pi x)}\, dx
	\]
	both vanish.
\end{proof}
%\bigskip

Although the choice of the weight function $w(x) =e^{-\cos(2\pi x)}$ used in the above proof is not unique, Corollary \ref{Res1d} ensures that the residue map $\Res$ is, however, unique up to a constant multiple.
%\bigskip

\begin{lemma}
\label{T:orderone}
Let $f$ be an entire function of exponential type at most $\ln 2$. If there exists a sequence $\{x_n\}$ of positive real numbers tending to $+\infty$ such that
	\[
		f(x_n+iy)=o(x_n)
	\]
for every $y\in\mathbb{R}$ uniformly as $n\to\infty$, then $f$ is a constant.
\end{lemma}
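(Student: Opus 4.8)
The plan is to prove that $f$ is constant by showing that every Newton coefficient $a_k$ with $k\ge 1$ vanishes. Since $f$ is of exponential type at most $\ln 2$, Gelfond's criterion recalled in \S\ref{SS:Difference} places $f$ in the module $H$ and gives the expansion $f=\big[\sum_k a_k X^k\big]\cdot 1$; hence it suffices to prove $a_k=0$ for all $k\ge 1$, for then $f\equiv a_0$. By the Cauchy integral formula (Theorem~\ref{T:CIFA}) each coefficient is read off by a residue map, $a_k=c\,\Res\big(X^{-(k+1)}f\big)$ with $c\neq 0$, so the entire problem is reduced to estimating one Barnes-type integral. The idea is to arrange that integral so that its integrand samples $f$ out along the vertical line $\Re u=x_n$, where the hypothesis $|f(x_n+iy)|=o(x_n)$ can do its work.

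The step I expect to be the genuine obstacle is that the residue integral is not freely deformable: on a vertical line one has $|e^{-\cos 2\pi u}|=e^{-\cos(2\pi\Re u)\cosh(2\pi\Im u)}$, which is integrable only when $\cos(2\pi\Re u)>0$, and the bad half-period through $\Re u\equiv\tfrac12$ cannot be crossed while keeping the horizontal sides of a shifting rectangle bounded. Thus one cannot simply push the contour of a single residue map out to $x_n$. My remedy is to use, for each $n$, the adapted weight $w_n(u)=e^{-\cos(2\pi(u-x_n))}$. This is again $1$-periodic, so by (the proof of) Lemma~\ref{T:uniqueness} it still defines a legitimate residue map $\Res_n$; and it decays like $e^{-\cosh 2\pi y}$ precisely on $\Re u=x_n$, making the relevant integral converge there. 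Applying Theorem~\ref{T:CIFA} to each $\Res_n$ expresses the one intrinsic coefficient as $a_k=c_n\,\Res_n\big(X^{-(k+1)}f\big)$.

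It then remains to run the estimate. Unwinding the action of $X^{-1}$ (namely $(X^{-1}h)(t)=h(t+1)/(t+1)$) and substituting $u=x+k+1$, which preserves the periodic weight, turns the residue into
\[
\Res_n\big(X^{-(k+1)}f\big)=\int_{\Re u=x_n}\frac{f(u)}{u(u-1)\cdots(u-k)}\,w_n(u)\,du .
\]
On $\Re u=x_n$ the numerator is bounded by $\sup_y|f(x_n+iy)|=\epsilon_n x_n$ with $\epsilon_n\to0$, the denominator satisfies $|u(u-1)\cdots(u-k)|\ge (x_n-k)^{k+1}$, and $w_n$ integrates to a finite constant, so $\big|\Res_n(X^{-(k+1)}f)\big|\lesssim \epsilon_n x_n/(x_n-k)^{k+1}$. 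A matching estimate $\Res_n(X^{-1})\asymp 1/x_n$ shows the normalising constant grows only like $|c_n|\asymp x_n$, which for $k\ge1$ is overwhelmed by the denominator:
\[
|a_k|=|c_n|\,\big|\Res_n\big(X^{-(k+1)}f\big)\big|\ \lesssim\ \frac{\epsilon_n\,x_n^{2}}{(x_n-k)^{k+1}}\ \longrightarrow\ 0 .
\]
Since $a_k$ is a fixed number, it must be $0$ for every $k\ge1$, and therefore $f\equiv a_0$ is constant. The remaining verifications—convergence of the integrals, the holomorphy legitimising the formula (the only poles sit at $u=0,1,\dots,k$, far to the left of $\Re u=x_n$), and the two growth bounds—are routine; the conceptual crux is the passage from a single residue map to the adapted family $\{\Res_n\}$, which is what allows the locally defined coefficient $a_k$ to feel the decay of $f$ far out at $x_n$.
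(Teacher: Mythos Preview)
Your proof is correct and follows the same residue-extraction strategy as the paper, but you and the paper resolve the convergence issue for the Barnes-type integral in different ways.

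The paper's device is simpler: it observes that $\{x_n\}$ has a subsequence $\{x_{n_j}\}$ on which $\cos(2\pi x_{n_j})$ keeps a fixed sign, and then uses the \emph{single} residue map with weight $e^{-\cos(2\pi t)}$ (or $e^{+\cos(2\pi t)}$ in the other case).  On $\Re t=x_{n_j}$ one then has $|e^{\mp\cos(2\pi t)}|=e^{\mp\cos(2\pi x_{n_j})\cosh(2\pi y)}\le 1$, and since the weight---hence the normalising constant $c$---does not depend on $j$, the estimate reads simply $|a_k|\le |c|\max_{\Re t=x_{n_j}}|f(t)|/(x_{n_j}-k)^{k+1}\to 0$, with no bookkeeping on $c$.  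Your route instead keeps the full sequence and tailors an $n$-dependent weight $w_n(u)=e^{-\cos(2\pi(u-x_n))}$ so that it decays like $e^{-\cosh 2\pi y}$ precisely on $\Re u=x_n$; the price is that $c_n$ now varies, and you correctly supply the extra estimate $|c_n|\asymp x_n$ via $\Res_n(X^{-1})$.  This costs one power of $x_n$ in the numerator, but for $k\ge 1$ the denominator $(x_n-k)^{k+1}$ still dominates.  Your argument is a little longer but has the virtue of giving a genuinely integrable weight on every line $\Re u=x_n$, whereas the paper's bound $|e^{\mp\cos(2\pi t)}|\le 1$ only furnishes a convergent integral when $\cos(2\pi x_{n_j})$ is strictly positive (resp.\ negative).
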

%\medskip

\begin{proof}
Let $H\subset\mathcal{O}(\mathbb{C})$ be the subspace consisting of entire functions of exponential type at most $\ln 2$, which is considered as a left $\mathcal{A}$-module as in \S\ref{SS:Difference}. Now $f\in H$ admits a ``Newton series expansion"
	\[
		f=\Big[\sum_{k=0}^\infty a_kX^k\Big]\cdot 1
	\]
via the map \eqref{E:evaluation-A}. Note that the sequence $\{x_n\}$ has either a subsequence $\{x_{n_j}\}$ such that $\cos(2\pi x_{n_j})\ge 0$ for all $j$, or a subsequence $\{x_{n_j}\}$ such that $\cos(2\pi x_{n_j})\le 0$ for all $j$. In the former case we choose the residue map to be
\[
	\Res f = \int_{a-i\infty}^{a+i\infty}f(t)\, e^{-\cos(2\pi t)}\,dt
\]
as before, while in the latter case we choose the residue map to be
\[
	\Res f = \int_{a-i\infty}^{a+i\infty}f(t)\, e^{\cos(2\pi t)}\,dt
\]
instead (Here $a$ can be any positive real number because of Cauchy's integral theorem). Then in both cases, we have 
	\[
		|e^{\mp\cos(2\pi t)}|=e^{\mp\cos(2\pi\Re t)\cosh(2\pi\Im t)}\le 1
	\] when $\Re t=x_{n_j}$.  
	So for every $k\ge 1$, we have by Theorem \ref{T:CIFA}
\begin{align*}
|a_k|&= \left|c\Res\Big(\dfrac{1}{X^{k+1}}f\Big)\right|\\
&=\left|c\int_{x_{n_j}-k-1-i\infty}^{x_{n_j}-k-1+i\infty}\dfrac{f(t+k+1)}{(t+1)(t+2)\cdots(t+k+1)}e^{\mp\cos(2\pi t)}\,dt\right|\\
 &\leq \dfrac{|c|}{(x_{n_j}-k)^{k+1}}\max_{\Re t=x_{n_j}}|f(t)|,
\end{align*}
which tends to $0$ as $j\to\infty$. Therefore $a_k=0$ for every $k\ge 1$.
\end{proof}

\subsubsection*{Completion of the proof of Theorem   \ref{T:liouville_1}} We simply apply Lemma \ref{T:orderone} to the function $f/p$ in order to complete the proof.\qed

%\subsection{Difference Nevanlinna theory} \redblk{Halburd and Korhonen's celebrated difference Nevanlinna theory \cite{Halburd-Korhonen}  implies a difference version of Picard's theorem }

%\redblk{
%\begin{theorem}[\cite{Halburd-Korhonen}] If a finite-order meromorphic function $f$ has a three exceptional paired values with separation $c$, then $f$ is a $c$-periodic function.
%\end{theorem}
%}
%\medskip

% \redblk{Halburd and Korhonen defined a value $A$ to be \textit{exceptional paired values with separation $c$ for} $f$ if whenever $f(x)=A$, then also $f(x+c)=A$ with the same or higher multiplicity. Without loss of generality, we may take $c=1$ to simplify our discussion. Suppose ...
%}

\section{$q$-Liouville\rq{s} theorems} \label{S:q}

We briefly discuss how to modify the Weyl algebra \eqref{D:weyl-alg} into the setting of $q$-difference operator. In the whole \S\ref{S:q}, we let $q\in\mathbb{C}^*$ and suppose that $|q|\ne 1$.

\subsection{The $q$-Weyl algebra}

\begin{definition}
The \textit{$q$-Weyl algebra} $\mathcal{A}_q$ is the $\mathbb{C}$-algebra defined by
	\[
\mathcal{A}_q=\mathbb{C}\langle X,\partial\rangle /\langle \partial X-qX\partial-1\rangle.
	\]
\end{definition}

\begin{remark}
One can recover the results concerning $\mathcal{A}$ from \S\ref{S:main} by taking limit $q\to 1$ in those results concerning $\mathcal{A}_q$.
\end{remark}
%\bigskip

Fix $L\in\mathcal{A}_q$, and denote by $\mathcal{A}_qL$ the left ideal generated by $L$. Then the quotient
\[
\mathcal{A}_q/\mathcal{A}_qL
\]
is a left $\mathcal{A}_q$-module.

%\bigskip

\begin{definition}\label{D:AqF}
Fix a $\mathbb{C}$-vector subspace $\mathbb{C}[X]\subset F\subset\mathbb{C}[[X]]$ of the space of formal power series in $X$.  We use the notation $\mathcal{A}_{q,\, F}$ to denote the ring extension of $\mathcal{A}_q$ generated by $F$ and $\partial$ as a $\mathbb{C}$-algebra, subject to the relation $\partial X-qX\partial-1$. We suppress the emphasis of the dependence of $\mathcal{A}_{q,\, F}$ on $F$ in later usages when there is no ambiguity.
\end{definition}
\bigskip

%\bigskip
\subsubsection{Applications to Jackson\rq{s} $q$-difference operator}
\label{eg:q-derivative}

Suppose that $0<|q|<1$. As in Definition \ref{D:AqF}, we let $F\subset\mathbb{C}[[X]]$ be the subspace defined by
	\begin{equation}\label{E:F}
		F=\Big\{\sum_k a_kX^k:\limsup_n|a_n|^{1/n}<1\Big\},
	\end{equation}
and let $\mathcal{A}_{q,F}$ be the $\mathbb{C}$-algebra generated by $F$ and $\partial$ subject to the relation \linebreak $\partial X-qX\partial -1$. Then $\mathcal{A}_{q,F}$ contains $\mathcal{A}_q$ as a subalgebra, and from now on we denote $\mathcal{A}_{q,F}$ by just $\mathcal{A}_q$. Consider the subspace $H\subset\mathcal{O}(\mathbb{C}^*)$ consisting of all analytic functions defined on the punctured complex plane which have $q$-exponential growth of order $<\ln |q^{-1}|$ \cite{Annaby-Mansour,Ramis}. Then $H$ is endowed with the structure of a left $\mathcal{A}_q$-module by
	\begin{equation}\label{E:q-difference-A1}
\begin{array}{l}
{(Xf)(t)=(t-1)f(qt)};\\
(\partial f)(t)=\dfrac{f(t)-f(t/q)}{t-t/q}\hspace{1cm}\mbox{for all }f\in H\mbox{ and all }t\in\mathbb{C}^*.
\end{array}
	\end{equation}
Fix any ``periodic function" $p$ (i.e. $p(qt)=p(t)$ for all $t$, so that $p$ must in fact be a constant), then we obtain a left $\mathcal{A}_q$-linear map
	\begin{equation}\label{E:evaluation-map}
N_q:\mathcal{A}_q/\mathcal{A}_q\partial\stackrel{\cdot p}{\longrightarrow}H.
	\end{equation}
Similar to the previous application in \S\ref{SS:Difference}, take $p$ to be the constant function $1$, then we recover
\[
\sum_k(-1)^ka_k(1-x)(1-qx)\cdots(1-q^{k-1}x)
\]
as the series expansion of the analytic function $N_q[\sum_ka_kX^k]$ in the polynomial base $\{(x;q)_n\}$ \cite{Jackson,Annaby-Mansour,Ismail}.

%As another example, if we fix a theta function $\theta$ (i.e. $\theta(qt)=(\nu(q-1)t+1)\theta(t)$ for all $t\in\mathbb{C}^*$, so that $(\partial-\nu)\theta=0$), then we obtain a left $\mathcal{A}_q$-linear map
%\[
%M:\mathcal{A}_q/\mathcal{A}_q(\partial-\nu)\stackrel{\cdot\theta}{\longrightarrow}\mathcal{O}(\mathbb{C}^*),
%\]
On the other hand, if $|q|>1$, then in Definition \ref{D:AqF} we let $F\subset\mathbb{C}[[X]]$ be the subspace defined by
	\[
		F=\Big\{\sum_k a_kX^k:\limsup_n|a_n|^{\frac{2}{n(n-1)}}<|q^{-1}|\Big\}.
	\]
Then the same left $\mathcal{A}_q$-module structure as in \eqref{E:q-difference-A1} and \eqref{E:evaluation-map} can be equipped on the whole space $\mathcal{O}(\mathbb{C})$ of all entire functions, and we obtain the same kind of series expansions in the polynomial base $\{(x;q)_n\}$ \cite{Annaby-Mansour}.

\bigskip
In whichever case for $|q|$ above, if instead of \eqref{E:q-difference-A1}, we endow $\mathcal{O}(\mathbb{C}^*)$ with a slightly different left $\mathcal{A}_q$-module structure
\begin{equation}\label{E:q-difference-A2}
\begin{array}{l}
{(Xf)(t)=tf(qt)};\\
(\partial f)(t)=\dfrac{f(t)-f(t/q)}{t-t/q}\hspace{1cm}\mbox{for all }f\in\mathcal{O}(\mathbb{C}^*)\mbox{ and all }t\in\mathbb{C}^*,
\end{array}
	\end{equation}
and choose the map
\begin{equation}\label{E:evaluation-map-2}
  \tilde{N_q}:\mathcal{A}_q/\mathcal{A}_q\partial\stackrel{\cdot 1}{\longrightarrow}\mathcal{O}(\mathbb{C}^*).
	\end{equation}
instead of \eqref{E:evaluation-map} (note that the definition of the scalar multiplications in \eqref{E:evaluation-map} and \eqref{E:evaluation-map-2} are different), then it gives a series expansion of the analytic function $\tilde{N_q}[\sum_ka_kX^k]$ which assumes the form
	\[
		\sum_k a_k q^{\binom{k}{2}}x^k.
	\]
This includes the well-known \textit{basic hypergeometric series} \cite{Gasper_Rahman}. 
\bigskip

Once we have fixed the basic notations, the following statements which are parallel to those from \S\ref{S:main} become self-evident.
\bigskip

\subsection{$q$-Residues}
\begin{definition}
Choose a $\mathbb{C}$-vector subspace $\mathbb{C}[X]\subset F\subset\mathbb{C}[[X]]$. We use the notation $\mathcal{A}_{q,0}$ to denote the $\mathbb{C}$-algebra generated by $F$, $\dfrac{1}{X}$ and $\partial$, subject to the relation $\partial X-qX\partial-1$. The dependence on $F$ in the notation $\mathcal{A}_{q,0}$ is again suppressed for simplicity.
\end{definition}
%\bigskip

In the rest of this section, a choice of $F$ is understood without further emphasis.
\medskip

\begin{lemma}
The cokernel of the map
\[
\mathcal{A}_{q,0}/\mathcal{A}_{q,0}\partial\stackrel{\partial\times}{\longrightarrow}\mathcal{A}_{q,0}/\mathcal{A}_{q,0}\partial
\]
has $\mathbb{C}$-dimension one and is spanned by $1/X$.
\end{lemma}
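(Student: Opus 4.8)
The plan is to mirror the proof of the corresponding lemma for the Weyl algebra $\mathcal{A}_0$, replacing the classical identity $\partial X^n \equiv nX^{n-1} \pmod{\mathcal{A}_0\partial}$ with its $q$-deformation. First I would establish, for every integer $n\in\mathbb{Z}$, the commutation formula
\[
\partial X^n = q^n X^n\partial + [n]_q\, X^{n-1},
\]
where $[n]_q = (q^n-1)/(q-1)$. For $n\ge 0$ this follows by a straightforward induction from the defining relation $\partial X = qX\partial + 1$, driven by the recursion $q^n + [n]_q = [n+1]_q$. To reach the negative powers I would first derive $\partial X^{-1} = q^{-1}X^{-1}\partial - q^{-1}X^{-2}$ by conjugating the defining relation by $X^{-1}$ (legitimate since $1/X\in\mathcal{A}_{q,0}$), and then run the induction downward using the identity $[n]_q - q^{n-1} = [n-1]_q$; this yields the displayed formula uniformly in $n\in\mathbb{Z}$.

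Reducing modulo the left ideal $\mathcal{A}_{q,0}\partial$ annihilates the term $q^n X^n\partial$, leaving
\[
\partial X^n \equiv [n]_q\, X^{n-1} \pmod{\mathcal{A}_{q,0}\partial}\qquad\text{for all } n\in\mathbb{Z}.
\]
The decisive point is then purely arithmetic: $[n]_q = 0$ exactly when $q^n = 1$, and since $|q|\ne 1$ by the standing hypothesis of this section, this forces $n=0$. Hence $[n]_q\ne 0$ for every $n\ne 0$, so each monomial $X^{m}$ with $m\ne -1$ lies in the image of $\partial\times$ (take $n=m+1$ and rescale by $1/[m+1]_q$), whereas $X^{-1}$ can only arise from $n=0$, where $\partial X^0=\partial\equiv 0$. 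Thus $X^{-1}$ has no ``anti-$\partial$'', the image of $\partial\times$ is precisely the span of $\{X^m : m\ne -1\}$, and the cokernel is one dimensional, spanned by $1/X$.

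The step I expect to be the crux — and the only place the assumption $|q|\ne 1$ enters — is the nonvanishing of the $q$-integers $[n]_q$: were $q$ a root of unity with $q^n=1$ for some $n\ne 0$, the coefficient $[n]_q$ would vanish and $X^{n-1}$ would join $X^{-1}$ in the cokernel, inflating its dimension and destroying the conclusion. A secondary point to pin down, so that ``spanned by $1/X$'' is an honest one-dimensional statement rather than a mere upper bound, is that the images of the powers $X^m$ form a spanning set of $\mathcal{A}_{q,0}/\mathcal{A}_{q,0}\partial$ with no hidden linear relations; this is handled by the normal form in which every element of $\mathcal{A}_{q,0}$ is written as $\sum_{i,j}c_{ij}X^i\partial^j$ with the $\partial$'s pushed to the right, the $j\ge 1$ terms dropping out in the quotient.
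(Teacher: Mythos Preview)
Your proposal is correct and follows essentially the same route as the paper's proof: the paper simply records the identity $\partial X^n \equiv \dfrac{1-q^n}{1-q}\,X^{n-1}\ \bmod\ \mathcal{A}_{q,0}\partial$ for all $n\in\mathbb{Z}$ and leaves the induction to the reader, whereas you spell out the full commutation relation $\partial X^n = q^nX^n\partial + [n]_qX^{n-1}$, the upward and downward inductions, and the role of the hypothesis $|q|\ne 1$ in keeping $[n]_q$ nonzero for $n\ne 0$. The additional remarks on the normal form and on what would fail at roots of unity are welcome elaborations but do not constitute a different method.
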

\begin{proof}
One easily proves by induction that
\[
\partial X^n=\dfrac{1-q^n}{1-q}X^{n-1}\hspace{1cm}\mod\;\mathcal{A}_{q,0}\partial
\]
for each $n\in\mathbb{Z}$.
\end{proof}
%\bigskip

\begin{definition}
Let $M$ be a left $\mathcal{A}_q$-module. Then we denote
\[
M_0=\mathcal{A}_{q,0}\otimes_{\mathcal{A}_q} M.
\]
\end{definition}
%\bigskip

\begin{corollary}
Let $M$ be a left $\mathcal{A}_q$-module freely generated by a singleton over $F$. Then the cokernel of
\[
M_0\stackrel{\partial\times}{\longrightarrow}M_0
\]
is one dimensional.
\end{corollary}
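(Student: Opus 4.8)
The plan is to reduce the statement to the preceding Lemma, exactly as one argues for Corollary~\ref{Res1d} in the non-$q$ case, by identifying $M_0$ with $\mathcal{A}_{q,0}/\mathcal{A}_{q,0}\partial$ as a left $\mathcal{A}_{q,0}$-module. First I would note that $M$ being freely generated by a singleton $\{p\}$ over $F$ means precisely that the generator satisfies $\partial p=0$ and that the induced $\mathcal{A}_q$-linear map
\[
\mathcal{A}_q/\mathcal{A}_q\partial \longrightarrow M, \qquad [1]\mapsto p,
\]
is an isomorphism. This is consistent because $\mathcal{A}_q/\mathcal{A}_q\partial$ is itself free of rank one over $F$ on the class $[1]$: pushing every $\partial$ to the right by means of the normal-ordering relation $\partial X^k=[k]_q X^{k-1}+q^k X^k\partial$ (with $[k]_q=(1-q^k)/(1-q)$), one sees that modulo the left ideal $\mathcal{A}_q\partial$ each class has a unique representative in $F\cdot 1$, the uniqueness being the Poincar\'e--Birkhoff--Witt property of $\mathcal{A}_q$ (valid since $|q|\neq 1$).

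Next I would carry out the base change along $\mathcal{A}_q\hookrightarrow\mathcal{A}_{q,0}$. Applying the right-exact functor $\mathcal{A}_{q,0}\otimes_{\mathcal{A}_q}(-)$ to the presentation
\[
\mathcal{A}_q \stackrel{\cdot\partial}{\longrightarrow} \mathcal{A}_q \longrightarrow \mathcal{A}_q/\mathcal{A}_q\partial \longrightarrow 0
\]
and using $\mathcal{A}_{q,0}\otimes_{\mathcal{A}_q}\mathcal{A}_q\cong\mathcal{A}_{q,0}$ gives, as left $\mathcal{A}_{q,0}$-modules,
\[
M_0 = \mathcal{A}_{q,0}\otimes_{\mathcal{A}_q} M \;\cong\; \mathcal{A}_{q,0}\otimes_{\mathcal{A}_q}(\mathcal{A}_q/\mathcal{A}_q\partial) \;\cong\; \mathcal{A}_{q,0}/\mathcal{A}_{q,0}\partial.
\]
The decisive observation that welds the two sides together is that every left $\mathcal{A}_{q,0}$-module homomorphism $\phi$ satisfies $\phi(\partial\cdot m)=\partial\cdot\phi(m)$, since $\partial$ is an element of the ring $\mathcal{A}_{q,0}$; hence each isomorphism above commutes with the $\mathbb{C}$-linear operators $\partial\times$ on source and target. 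Therefore the cokernel of $\partial\times$ on $M_0$ is isomorphic to the cokernel of $\partial\times$ on $\mathcal{A}_{q,0}/\mathcal{A}_{q,0}\partial$, which by the preceding Lemma is one dimensional, spanned by the class of $1/X$; pulling back, the cokernel on $M_0$ is spanned by $(1/X)\,p$.

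The one step requiring genuine care is the identification of $M$ with $\mathcal{A}_q/\mathcal{A}_q\partial$, and inside it the fact that the cyclic generator $p$ is annihilated by $\partial$; this is exactly what ``freely generated by a singleton over $F$'' is meant to encode, and it is borne out by the interpretations in \S\ref{eg:q-derivative}, where the generator is the constant $q$-periodic function. The remaining ingredients are purely formal: the right-exactness of the tensor functor and the automatic compatibility of left-module maps with left multiplication by $\partial$. The inversion of $X$ poses no obstacle, since the preceding Lemma already accounts for $1/X$ and all negative powers through the relation $\partial X^n=\frac{1-q^n}{1-q}X^{n-1}\bmod\mathcal{A}_{q,0}\partial$.
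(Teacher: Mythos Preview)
Your argument is correct and is exactly the intended one: the paper gives no separate proof of this corollary (nor of the parallel Corollary~\ref{Res1d}), treating it as immediate from the preceding Lemma, and your reduction---identifying $M$ with $\mathcal{A}_q/\mathcal{A}_q\partial$ via the cyclic generator, base-changing to get $M_0\cong\mathcal{A}_{q,0}/\mathcal{A}_{q,0}\partial$, and noting that left-module isomorphisms intertwine $\partial\times$---is precisely how that immediacy is justified. One small remark: the parenthetical ``valid since $|q|\neq 1$'' attached to the PBW/normal-ordering claim is not quite the right hypothesis (the normal form in $\mathcal{A}_q$ holds for any nonzero $q$), but this does not affect the argument.
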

%\bigskip

\begin{definition}
Let $M$ be a left $\mathcal{A}_q$-module freely generated by a singleton over $F$. Then a choice of an isomorphism
\[
\mathrm{Res}:M_0/\partial M_0\longrightarrow\mathbb{C}
\]
is called a \textit{residue} map.
\end{definition}
%\bigskip

\begin{theorem}[Cauchy integral formula]\label{T:CIFAq}
Let $M$ be a left $\mathcal{A}_q$-module which is freely generated by a singleton $\{p\}$ over $F$. For each choice of residue map $\Res:M_0/\partial M_0\to\mathbb{C}$, there exists a scalar $c\in\mathbb{C}^*$ such that if $f\in M$ has the ``Taylor series expansion"
\[
f=\Big[\sum_ka_kX^k\Big]p,
\]
then
		\begin{equation}\label{E:residue_2}
	a_k=c\, \Res\Big(\dfrac{1}{X^{k+1}}f\Big)\hspace{1cm}\mbox{for every }k\in\mathbb{N}.
	\end{equation}
\end{theorem}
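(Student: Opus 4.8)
The plan is to mirror the classical proof of a Cauchy-type coefficient formula by reducing the statement to a computation in the cokernel $M_0/\partial M_0$, using that by the preceding corollary this cokernel is one-dimensional and (by the definition of a residue map) identified with $\mathbb{C}$ via $\Res$. The key structural input is the $q$-analogue of the differentiation formula $\partial X^n = \frac{1-q^n}{1-q}X^{n-1} \bmod \mathcal{A}_{q,0}\partial$, established in the lemma above, which tells us exactly how $\partial$ acts on the two-sided tower of powers $X^n$, $n\in\mathbb{Z}$.

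First I would fix the free generator $p$ and write a general element $f = \bigl[\sum_k a_k X^k\bigr]p \in M$. The goal is to isolate $a_k$, so I would consider $\frac{1}{X^{k+1}}f = \bigl[\sum_j a_j X^{j-k-1}\bigr]p$ inside $M_0$ and compute its class in $M_0/\partial M_0$. The decisive observation is that the $q$-differentiation formula shows $X^{n}$ for $n\neq -1$ lies in the image of $\partial\times$ (since $\partial X^{n+1} = \frac{1-q^{n+1}}{1-q}X^{n}$, and the coefficient $\frac{1-q^{n+1}}{1-q}$ is nonzero precisely when $q^{n+1}\neq 1$, which holds for all relevant $n$ because $|q|\neq 1$ forces $q$ to not be a root of unity), so every term $X^{j-k-1}$ with $j\neq k$ vanishes in the quotient, leaving only the $j=k$ term. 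Thus in $M_0/\partial M_0$ we have $\bigl[\frac{1}{X^{k+1}}f\bigr] = a_k\,[X^{-1}p]$, where $[X^{-1}p]$ is the spanning class of the one-dimensional cokernel.

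Applying $\Res$ and using its $\mathbb{C}$-linearity then gives $\Res\bigl(\frac{1}{X^{k+1}}f\bigr) = a_k\,\Res(X^{-1}p)$. Since $\Res$ is by assumption an isomorphism onto $\mathbb{C}$ and $X^{-1}p$ spans the cokernel, the scalar $\Res(X^{-1}p)$ is a nonzero complex number; setting $c = \bigl(\Res(X^{-1}p)\bigr)^{-1}$ yields $a_k = c\,\Res\bigl(\frac{1}{X^{k+1}}f\bigr)$, which is exactly \eqref{E:residue_2}. The constant $c$ depends only on the chosen residue map and generator, not on $k$ or on $f$, matching the statement.

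The main obstacle I anticipate is rigorously justifying the term-by-term passage to the quotient for a genuinely infinite series rather than a polynomial: I must ensure that $\frac{1}{X^{k+1}}f$ is a bona fide element of $M_0$ (that the membership condition defining $F$ is preserved under multiplication by $X^{-k-1}$, at least for the relevant analytic class), and that the reduction $\sum_j a_j X^{j-k-1} \equiv a_k X^{-1}$ modulo $\partial M_0$ is legitimate when infinitely many terms are present — i.e. that one may interchange the (possibly infinite) sum with the residue/quotient map. This is where the completion structure and the convergence condition built into $F$ (from \eqref{E:F}) must be invoked to guarantee that the $X$-adic tails do not contribute, so that the formal computation with the $q$-derivative formula transfers faithfully to the analytic setting.
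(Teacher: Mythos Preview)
Your proposal is correct and is precisely the argument the paper intends: the paper does not write out a proof of this theorem at all, declaring it (along with its $q=1$ predecessor) to be ``self-evident'' once the preceding lemma $\partial X^n=\tfrac{1-q^n}{1-q}X^{n-1}\bmod\mathcal{A}_{q,0}\partial$ and the one-dimensionality of the cokernel are in place. Your write-up makes explicit exactly the reduction the paper leaves implicit---killing every $X^{j-k-1}$ with $j\ne k$ in the quotient and reading off $c=\Res(X^{-1}p)^{-1}$---and your flagged concern about the infinite-sum interchange is the only analytic point requiring care, which the paper likewise suppresses.
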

\bigskip

\subsection{$q$-Liouville's theorems}

Suppose that $0<|q|<1$, let $F\subset\mathbb{C}[[X]]$ be the subspace as defined in \eqref{E:F} and let $H\subset\mathcal{O}(\mathbb{C}^*)$ be the first left $\mathcal{A}_q$-module in \S\ref{eg:q-derivative}, defined by \eqref{E:q-difference-A1} and \eqref{E:evaluation-map}.

\subsubsection*{Construction of a residue map}

For each $r>1$, one defines the residue as
	\[
	\begin{array}{rrcl}
	\Res:&H_0/\partial H_0&\longrightarrow &\mathbb{C}\\
	&f&\mapsto&\dfrac{1}{2\pi i}\displaystyle\int_{\partial D(0,r)} f(x)\, dx.
	\end{array}
\]
\medskip

It is easy to see that this residue map is also well-defined.

\begin{lemma}\label{T:uniqueness-q} 	
If $f=\partial g$ for some $g\in H_0$, then  $\Res f =0$.
\end{lemma}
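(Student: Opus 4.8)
The plan is to show that the residue map defined by contour integration over $\partial D(0,r)$ vanishes on exact elements $f=\partial g$, by directly substituting the definition of $\partial$ from the $q$-difference module structure and exploiting the change of variables $x\mapsto qx$ (or $x/q$) together with the residue theorem. First I would write out
\[
\Res(\partial g)=\frac{1}{2\pi i}\int_{\partial D(0,r)}(\partial g)(x)\,dx=\frac{1}{2\pi i}\int_{\partial D(0,r)}\frac{g(x)-g(x/q)}{x-x/q}\,dx,
\]
using the interpretation $(\partial g)(x)=\big(g(x)-g(x/q)\big)/(x-x/q)$ from \eqref{E:q-difference-A1}. Since $0<|q|<1$, we have $|q^{-1}|>1$, so the map $x\mapsto x/q$ scales the circle $|x|=r$ to the larger circle $|x|=r/|q|$. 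The key idea is to split the integral into the two pieces coming from $g(x)$ and from $g(x/q)$, and on the second piece perform the substitution $u=x/q$ so that both pieces become contour integrals of the \emph{same} integrand $g$ against the weight $1/(x-x/q)$ (suitably transformed), but over circles of different radii; the difference of two such circular integrals of a function analytic in the intervening annulus is zero by Cauchy's theorem.

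More precisely, the second step is to manage the factor $x-x/q=(1-q^{-1})x$ in the denominator. Writing $1/(x-x/q)=q/((q-1)x)$, the integrand becomes $\tfrac{q}{q-1}\,\big(g(x)-g(x/q)\big)/x$, so
\[
\Res(\partial g)=\frac{q}{q-1}\cdot\frac{1}{2\pi i}\left(\int_{\partial D(0,r)}\frac{g(x)}{x}\,dx-\int_{\partial D(0,r)}\frac{g(x/q)}{x}\,dx\right).
\]
In the second integral I would substitute $u=x/q$, $dx=q\,du$, sending $\partial D(0,r)$ to $\partial D(0,r/|q|)$ with the same orientation and giving $\int_{\partial D(0,r/|q|)} g(u)/u\,du$. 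Thus the two integrals are both of the form $\tfrac{1}{2\pi i}\int_{\partial D(0,\rho)}g(u)/u\,du$ over circles of radii $r$ and $r/|q|$ respectively. The only possible singularity of $g(u)/u$ between these circles is at $u=0$, but that lies inside the smaller circle $|u|=r$ as well, so the two integrals compute the \emph{same} residue (namely the coefficient of $u^{-1}$ in the Laurent expansion of $g$ about $0$) and hence cancel. This forces $\Res(\partial g)=0$.

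The step I expect to be the main obstacle is justifying that everything is legitimate on the enlarged algebra $\mathcal{A}_{q,0}$, where $g$ is only an element of $H_0=\mathcal{A}_{q,0}\otimes_{\mathcal{A}_q}H$ rather than a genuine analytic function; in particular $g$ may involve negative powers of $X$ (i.e.\ a pole at $0$), so I must ensure the relevant Laurent series of $g$ and of $g(\cdot/q)$ converge in an annulus containing both circles $|x|=r$ and $|x|=r/|q|$, with no poles other than at $0$ in the closed annulus. Here the growth condition built into $F$ via \eqref{E:F} — namely $\limsup_n|a_n|^{1/n}<1$ — is exactly what guarantees a common annulus of convergence large enough to contain the circle of radius $r/|q|$ (after accounting for the finitely many negative-power terms), so that Cauchy's theorem and the substitution are valid. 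Once this convergence bookkeeping is settled, the cancellation argument above makes $\Res$ well-defined on the quotient $H_0/\partial H_0$, exactly as the analogous Lemma~\ref{T:uniqueness} established in the additive case.
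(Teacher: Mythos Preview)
Your proof is correct and follows essentially the same route as the paper: write $\Res(\partial g)$ as a contour integral, split off the $g(x/q)$ term, substitute $u=x/q$ to move that piece to the larger circle $\partial D(0,r/|q|)$, and then cancel the two integrals by Cauchy's theorem on the intervening annulus. The only cosmetic difference is that you first simplify the denominator $x-x/q$ to a constant times $x$, whereas the paper keeps the integrand in the form $g(x)/(x-x/q)$ throughout; one small caveat is that in this module structure $(Xf)(t)=(t-1)f(qt)$, negative powers of $X$ introduce poles at the points $q,q^2,\ldots$ (all inside $|x|<1<r$) rather than at $0$, so your parenthetical ``i.e.\ a pole at $0$'' is not quite accurate, though your conclusion that $g$ is analytic on the closed annulus remains valid.
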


\begin{proof} If $\displaystyle f(x)=\frac{g(x)-g(x/q)}{x-x/q}$, then
\begin{align*}
	\int_{\partial D(0,r)}{f(x)\,dx}
	&= \int_{\partial D(0,r)}{\frac{g(x)-g(x/q)}{x-x/q}\,dx}\\
	&= \left(\int_{\partial D(0,r)}-\int_{\partial D(0,r/|q|)}\right) \frac{g(x)}{x-x/q}\, dx\\
	&= 0
\end{align*}
by Cauchy's integral theorem, since $g$ is analytic on the annulus bounded by the circles $\partial D(0,r)$ and $\partial D(0,r/|q|)$.
\end{proof}

\bigskip

\begin{theorem}\label{T:q-liouville}
Let $f$ be a holomorphic function defined on $\mathbb{C}^*$, which has $q$-exponential growth of order $<\ln |q^{-1}|$ \cite{Annaby-Mansour,Ramis}.
If
\[
|f(t)|=o(|t|)
\]
as $t\to\infty$, then $f$ is a constant.
\end{theorem}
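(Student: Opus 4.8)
The plan is to mirror the argument for the difference case in Lemma~\ref{T:orderone}: expand $f$ in its $q$-series through the evaluation map $N_q$, extract the coefficients by the Cauchy integral formula of Theorem~\ref{T:CIFAq}, and then estimate those integrals to force the vanishing of every coefficient past the zeroth one. The hypothesis that $f$ has $q$-exponential growth of order $<\ln|q^{-1}|$ places $f$ in the module $H$ of \S\ref{eg:q-derivative}, so $f$ admits an expansion $f=\big[\sum_k a_kX^k\big]\cdot 1$ in the base $\{(x;q)_n\}$. With the residue map $\Res g=\frac{1}{2\pi i}\int_{\partial D(0,r)}g(x)\,dx$, Theorem~\ref{T:CIFAq} supplies a constant $c\in\mathbb{C}^*$ with $a_k=c\,\Res\big(\tfrac{1}{X^{k+1}}f\big)$. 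It then suffices to prove $a_k=0$ for every $k\ge 1$, for then the expansion collapses to its single surviving term, the constant $a_0$.

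The key preliminary is to realise $\tfrac{1}{X^{k+1}}f$ as an explicit function so that the contour integral can be bounded. Inverting the relation $(Xf)(t)=(t-1)f(qt)$ gives
\[
	\Big(\tfrac{1}{X}f\Big)(t)=\frac{q\,f(t/q)}{t-q},
\]
and a routine induction then yields
\[
	\Big(\tfrac{1}{X^{k+1}}f\Big)(t)=\frac{q^{(k+1)(k+2)/2}\,f\big(t/q^{\,k+1}\big)}{(t-q)(t-q^2)\cdots(t-q^{\,k+1})}.
\]
Since $0<|q|<1$, the poles $q,q^2,\dots,q^{\,k+1}$ all lie in $\{|t|<1\}$, and as $f$ is holomorphic on $\mathbb{C}^*$ this integrand is holomorphic on $\{|t|>1\}$. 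By Cauchy's theorem the value defining $a_k$ is therefore independent of the radius $r>1$, which frees us to let $r\to\infty$ in the estimate.

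Substituting into the Cauchy integral formula and estimating along $|t|=r$ gives
\[
	|a_k|\le |c|\,r\,|q|^{(k+1)(k+2)/2}\,\frac{\max_{|t|=r}\big|f(t/q^{\,k+1})\big|}{\prod_{j=1}^{k+1}\big(r-|q|^{\,j}\big)}.
\]
On $|t|=r$ one has $|t/q^{\,k+1}|=r|q^{-1}|^{\,k+1}\to\infty$, so the hypothesis $|f(t)|=o(|t|)$ forces $\max_{|t|=r}|f(t/q^{\,k+1})|=o\big(r|q^{-1}|^{\,k+1}\big)$; bounding the denominator below by $(r-|q|)^{k+1}$ and collecting the powers of $q$ reduces the right-hand side to a constant multiple of $|q|^{k(k+1)/2}\,r^2\epsilon(r)/(r-|q|)^{k+1}$ with $\epsilon(r)\to 0$. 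For every $k\ge 1$ this tends to $0$ as $r\to\infty$, and since $a_k$ is independent of $r$ we conclude $a_k=0$ for all $k\ge 1$; hence $f$ is constant.

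I expect the main obstacle to be the explicit inversion of $X$ and the bookkeeping of the $q$-powers $q^{(k+1)(k+2)/2}$ against the rescaling $t\mapsto t/q^{\,k+1}$ inside $f$; one must verify that the surviving exponent $|q|^{k(k+1)/2}$, together with the factor $|q^{-1}|^{\,k+1}$ produced by that rescaling, does not overwhelm the decay, and that the borderline index $k=1$ (where $r^2/(r-|q|)^{k+1}$ alone no longer vanishes) is rescued precisely by the $o$-term $\epsilon(r)\to 0$. In contrast with the difference case, neither a weight function nor a passage to a subsequence is required here, because the circular contour carries no weight and the condition $|f(t)|=o(|t|)$ already holds as a genuine limit as $t\to\infty$.
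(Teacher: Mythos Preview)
Your proposal is correct and follows essentially the same route as the paper: expand $f$ via $N_q$, realise $\tfrac{1}{X^{k+1}}f$ explicitly (your formula agrees with the paper's $\dfrac{f(t/q^{k+1})}{(t/q-1)\cdots(t/q^{k+1}-1)}$ after clearing the $q^{(k+1)(k+2)/2}$), and estimate the circular contour integral to obtain $|a_k|=o(r^{1-k})$ as $r\to\infty$. Your bookkeeping of the $q$-powers and the borderline case $k=1$ is accurate; the paper handles these slightly more tersely but identically in substance.
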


\begin{proof}
$H$ is a left $\mathcal{A}_q$-module which is freely generated by $\{1\}$ over $F$. By assumption, the given function $f\in H$ has a $q$-Taylor series expansion
\[
f=\Big[\sum_{k=0}^\infty a_kX^k\Big]\cdot 1
\]
via the map \eqref{E:evaluation-map}. Then for each $k\geq 1$ and $r>0$, we have by Theorem \ref{T:CIFAq}
\begin{align*}
|a_k|&=\left| \Res\Big(\dfrac{1}{X^{k+1}}f\Big)\right|\\
 &=\left|\frac{1}{2\pi i}\int_{\partial D(0,r)}\dfrac{f(t/q^{k+1})}{(t/q-1)(t/q^2-1)\cdots(t/q^{k+1}-1)}\,dt\right|\\
 &\le \frac{1}{2\pi}\frac{2\pi rM(r/|q|^{k+1},f)}{(r/|q|-1)(r/|q|^2-1)\cdots(r/|q|^{k+1}-1)}\\
 &=o(r^{1-k})
\end{align*}
which tends to $0$ as $r\to\infty$. Therefore $a_k=0$ for every $k\ge 1$.
\end{proof}
\bigskip

\begin{remark} If one considered the left $\mathcal{A}_q$-module structure defined by \eqref{E:q-difference-A2} and \eqref{E:evaluation-map-2} in this subsection, then the classical Liouville's theorem would be obtained instead of Theorem \ref{T:q-liouville}. Although the same formula $a_k=c\,\Res\Big(\dfrac{1}{X^{k+1}}f\Big)$ works for both left $\mathcal{A}_q$-module structures, its interpretation via \eqref{E:q-difference-A2} and \eqref{E:evaluation-map-2} is different from that via \eqref{E:q-difference-A1} and \eqref{E:evaluation-map} currently being considered.
Also note that although one can obtain the classical Liouville's theorem this way, it is still conceptually different from obtaining the same theorem using the left $\mathcal{A}$-module structure defined by the differential operator, which will be done in \S\ref{S:classical}.
\end{remark}

\bigskip

\section{Classical Liouville's theorem}\label{S:classical}

We now return to the Weyl algebra $\mathcal{A}$ introduced in \S\ref{S:main}. 

Let $U\subset\mathbb{C}$ be an open subset. We denote by $\mathcal{O}(U)$ the space of analytic functions defined on $U$. Then $\mathcal{O}(U)$ is endowed with the structure of a left $\mathcal{A}$-module by
\[
\begin{array}{l}
(Xf)(t)=tf(t);\\
(\partial f)(t)=f'(t)\hspace{1cm}\mbox{for all }f\in\mathcal{O}(U)\mbox{ and all }t\in U.
\end{array}
\]

Let $F\subset\mathbb{C}[[X]]$ be the subspace defined by
\[
F=\Big\{\sum_k a_kX^k:\lim_{n\to\infty}|a_n|^{1/n}=0\Big\},
\]
and $\mathcal{A}$ be the algebra generated by $F$ and $\partial$ (the dependence of $F$ has again been suppressed in the notation). 
The map
\[
\mathcal{A}/\mathcal{A}\partial\stackrel{\cdot 1}{\longrightarrow}\mathcal{O}(\mathbb{C})
\]
becomes an isomorphism of left $\mathcal{A}$-modules. It is the usual Maclaurin series expansion of entire functions.
%\end{example}
\bigskip

\subsubsection*{Construction of a residue map}

Let $\mathcal{O}(\mathbb{C})$ be the left $\mathcal{A}$-module above. For each $r>0$, one defines the residue as
	\[
	\begin{array}{rrcl}
	\Res:&\mathcal{O}(\mathbb{C})_0/\partial\mathcal{O}(\mathbb{C})_0&\longrightarrow &\mathbb{C}\\
	&f&\mapsto&\dfrac{1}{2\pi i}\displaystyle\int_{\partial D(0,r)} f(x)dx.
	\end{array}
\]

Hence we obtain

\begin{theorem}[Liouville]
Let $f\in\mathcal{O}(\mathbb{C})$ be an entire function such that
\[
|f(t)|=o(|t|)\hspace{1cm}\mbox{as }t\to\infty.
\]
Then $f$ is a constant.
\end{theorem}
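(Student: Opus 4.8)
The plan is to mirror the proof of Theorem~\ref{T:q-liouville}, exploiting the fact that the residue machinery of \S\ref{S:main} is completely general in the interpretation of the symbol $\partial$. Since the map $\mathcal{A}/\mathcal{A}\partial\stackrel{\cdot 1}{\to}\mathcal{O}(\mathbb{C})$ is an isomorphism of left $\mathcal{A}$-modules, the given entire function $f$ possesses a Maclaurin expansion $f=\big[\sum_{k=0}^\infty a_kX^k\big]\cdot 1$, and the Cauchy integral formula (Theorem~\ref{T:CIFA}) recovers its coefficients as $a_k=c\,\Res\big(\frac{1}{X^{k+1}}f\big)$ for some fixed $c\in\mathbb{C}^*$ and every $k\in\mathbb{N}$. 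The entire proof then reduces to showing that this residue forces $a_k=0$ for all $k\ge 1$, leaving $f=a_0\cdot 1$.

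First I would compute the integrand explicitly. Under the module structure $(Xf)(t)=tf(t)$, the element $\frac{1}{X^{k+1}}f$ is represented by the function $x\mapsto f(x)/x^{k+1}$, so that
\[
a_k=c\,\Res\Big(\frac{1}{X^{k+1}}f\Big)=\frac{c}{2\pi i}\int_{\partial D(0,r)}\frac{f(x)}{x^{k+1}}\,dx
\]
for every $r>0$. That the right-hand side is independent of $r$ is exactly the well-definedness of $\Res$ (the classical analogue of Lemma~\ref{T:uniqueness-q}), and it follows at once from Cauchy's integral theorem since $f(x)/x^{k+1}$ is holomorphic on $\mathbb{C}^*$.

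Next I would apply the standard estimate to this contour integral. Bounding $|f|$ on $\partial D(0,r)$ by $M(r,f)$ and using that the circle has length $2\pi r$ gives
\[
|a_k|\le \frac{|c|}{2\pi}\cdot 2\pi r\cdot\frac{M(r,f)}{r^{k+1}}=|c|\,\frac{M(r,f)}{r^{k}}.
\]
The growth hypothesis $|f(t)|=o(|t|)$ as $t\to\infty$ translates into $M(r,f)=o(r)$, whence $|a_k|=o(r^{\,1-k})$ as $r\to\infty$. For each fixed $k\ge 1$ this bound tends to $0$; but $a_k$ does not depend on $r$, so $a_k=0$. Therefore only $a_0$ survives and $f$ is constant.

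I do not expect a genuine obstacle: the argument is the $q\to 1$ shadow of Theorem~\ref{T:q-liouville}. The only point meriting care is the sharpness of the hypothesis at the borderline index $k=1$, where the estimate reads $|a_1|\le |c|\,M(r,f)/r$. Here the conclusion $a_1=0$ requires precisely $M(r,f)=o(r)$ rather than $M(r,f)=O(r)$; the linear function $f(t)=t$, with $a_1=1$ and $M(r,f)=r$, shows that the weaker $O(|t|)$ would fail. Thus matching the kernel decay $r^{-k}$ to the growth condition is exactly what singles out $o(|t|)$ as the correct hypothesis for this Weyl-algebraic formulation.
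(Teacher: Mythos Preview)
Your proposal is correct and follows essentially the same route as the paper's proof: expand $f$ via the left $\mathcal{A}$-module isomorphism, apply Theorem~\ref{T:CIFA} to identify each $a_k$ as a Cauchy-type contour integral, and use the standard length-times-maximum estimate together with $M(r,f)=o(r)$ to force $a_k=0$ for $k\ge1$. Your added remark on the necessity of $o(|t|)$ at the borderline $k=1$ is a nice touch not present in the paper but entirely consistent with its spirit.
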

\begin{proof}
$\mathcal{O}(\mathbb{C})$ is a left $\mathcal{A}$-module which is freely generated by $\{1\}$ over $F$. Expand the given entire function $f$ into a power series
\[
f=\Big[\sum_{k=0}^\infty a_k X^k\Big]\cdot 1.
\]
By the hypothesis, $|f(t)|\le M_{|t|}|t|$ for some $M_r$ such that $\lim_{r\to+\infty}M_r=0$. Then for each $k\geq 1$ and $r>0$, we have

\begin{align*}
|a_k|&=\left|\mathrm{Res}\Big(\dfrac{1}{X^{k+1}}f\Big)\right|\\
&=\dfrac{1}{2\pi}\left|\int_{\partial D(0,r)}\dfrac{f(t)dt}{t^{k+1}}\right|\\
&\leq\dfrac{1}{2\pi}2\pi r\dfrac{M_r r}{r^{k+1}}
\end{align*}
which tends to $0$ as $r\to+\infty$. Therefore $a_k=0$ for every $k\ge 1$.
\end{proof}

% \section{Applications to difference equations}
%\label{sec:DEapplications}

\section{Concluding remarks}
\label{S:discussion}

\subsubsection*{Discussion of main results}
In this article, we have found different versions of Liouville's theorems corresponding to various difference operators by adopting an algebraic approach to understanding series expansions in complex function theory. It is found that various kinds of series expansions of analytic or meromorphic functions can all be unified as different interpretations of the same ``symbolic" power series, which is an element in $\overline{\mathcal{A}/\mathcal{A}\partial}$. This new point of view has led to a unified \textit{residue theory}, which is the main ingredient in this article towards obtaining different kinds of Liouville's theorems obtained.

Although the residue map \eqref{E:residue_1} and the Cauchy integral formula \eqref{E:residue_2} are in an abstract algebraic setting, it turns out that for the particular cases of difference or differential operators that we consider, they can be realized using contour integrals.  In the consideration of the (forward) difference operator $\Delta f(x)=f(x+1)-f(x)$, the residue map is realized as a Barnes-type contour integral;
and in the cases of  Jackson's $q$-difference operator $D_qf(x)=[f(x)-f(x/q)]/(x-x/q)$ and the usual differential operator, the residue map is realized as a Cauchy-type contour integral.
These machineries therefore lead to the different analogues of Liouville's theorem obtained in this article. We emphasize that the boundedness conditions in these new analogues of Liouville's theorem may not be the same as the traditional ``boundedness".  Firstly, In  the setting of (forward) difference operator: entire functions $f$ that grow slower than a specific growth rate in the positive real direction reduce to a $1$-periodic function, i.e. $\Delta f=0$. Obviously, periodic functions  need not be  bounded in the classical sense. This conclusion also agrees with the Little Picard theorem for difference operator $\Delta$ discovered by Halburd and Korhonen \cite{Halburd-Korhonen}. Secondly, for Jackson's $q$-difference operator, holomorphic functions on $\mathbb{C}^*$ that grow slower than a specific growth rate reduce to an $f$ such that $D_qf=0$, i.e. a constant. Finally, for differential operator, we recover the classical Liouville's theorem: an entire function that is bounded (``traditionally") reduces to a constant.

\subsubsection*{Application to de Branges\rq{} space of entire functions}
Let $E$ be an entire function which satisfies
%	\begin{equation}\label{E:inequality}
	\[
		|E(\bar{z})|<|E(z)|
	\]
%	\end{equation}
	for all $z$ with $\Im(z)>0$. De Branges \cite{deBranges} considered the set  $\mathscr{H}(E)$ of entire functions $F$ such that
	\[
		\|F\|^2=\int_\mathbb{R} \Big|\frac{F(t)}{E(t)}\Big|^2\, dt<\infty
	\]and
	\[
		|F(z)|^2\le \|F\|^2 \frac{|E(z)|^2-E(\bar{z})|^2}{2\pi i (\bar{z}-z)}
	\]for all $z\in\mathbb{C}$. He showed that $\mathscr{H}(E)$ is a Hilbert space that enjoys special properties (see \cite[(H1--3)]{deBranges}) that generalise the classical Fourier series theory. He also showed that if $\mathscr{H}(E)$ is $1$-periodic (i.e. every function in $\mathscr{H}(E)$ is $1$-periodic), then one has the factorization
	\begin{equation}\label{E:factor}
		E(z)=F(z)\, G(z),
	\end{equation}
	where $F$ is an entire function which has only real zeros and satisfies $F(z-1)=\pm F(z)$, and $G$ is an entire function of exponential type with \textit{no} real zeros, such that $\mathscr{H}(G)$ is $1$-periodic. Our difference Liouville's Theorem (Theorem \ref{T:liouville_1}) shows that if we assume further that $G$ is ``bounded'' in the difference sense, (i.e., of exponential type $<\ln 2$ and grows slower than $\Re z$ in the direction parallel to the positive real axis), then $G$ must reduce to constant and $E$ itself must therefore be periodic according to  \eqref{E:factor}.%, i.e., ``bounded\rq\rq{} in the difference sense.

\subsubsection*{$D$-module approach  to solutions of difference/differential equations}

The Weyl algebraic point of view appears to be useful in studying almost all aspects of complex function theory, whenever differential and/or difference operators are involved \cite{CCT}. As an example, each family of classical hypergeometric orthogonal polynomials or basic hypergeometric orthogonal polynomials in the Askey scheme \cite{Ismail} can be presented as polynomial solutions to differential or ($q$-)difference equations. One can then make use of the Weyl algebraic machinery introduced in this article to study these families of polynomials as elements in left $\mathcal{A}$-modules $\mathcal{A}/\mathcal{A}L$ for different choices of $L\in\mathcal{A}$. At this point, we should mention that our $D$-module approach is different from the more well-known \textit{umbral calculus} approach advocated by the Rota school \cite{Roman} since the 1970's. 

Specifically, from the Weyl algebraic point of view, one realizes that the Charlier polynomial $C_n(x;a)$ is in fact a ``difference analogue" of the simple binomial \linebreak $(1-x/a)^n$. To see this, one refers to the Rodrigues formula for Charlier polynomials \cite[Chap. 5]{Beals_Wong_2016}, \cite{Koekoek}, which is
\[
	\frac{a^x}{x!}C_n(x;a)=\nabla^n\frac{a^x}{x!},
\]
where $\nabla$ is the backward shift operator defined by $\nabla f(x)=f(x)-f(x-1)$. One can then rearrange this to become
\[
	C_n(x;a) = \frac{x!}{a^x}\nabla^n\frac{a^x}{x!} = \left(\frac{x!}{a^x}\nabla\frac{a^x}{x!}\right)^n
\]
One notes that for any function $f$,
\[
	\left(\frac{x!}{a^x}\nabla\frac{a^x}{x!}\right)f(x)=f(x)-\frac{x}{a}f(x-1)=\left(1-\frac{X}{a}\right)f(x),
\]
if $X$ is interpreted using the left $\mathcal{A}$-module structure \eqref{E:difference-A} regarding the (forward) difference operator. As a result, the Charlier polynomial $C_n(x;a)$ is just the image of 
	the map
		\[
			\left(1-\frac{X}{a}\right)^n
		\] under a scalar multiplication to the constant function $1$, similar to \eqref{E:evaluation-A}:
\[
	C_n(x;a)=\sum_{k=0}^n{\binom{n}{k}(-a)^{-k}x(x-1)\cdots(x-k+1)}.
\]
The orthogonality of these polynomials can also be formulated as a pairing making use of the residue map defined in this article, which is now the obstruction of having ``anti-$(X-a)$" and is usually expressed as a sum rather than an integral. Their generating functions can also be derived via $D$-modules, making use of a holonomic system of PDEs. It turns out that the D-module approach hinted in this paper not only leads to an alternative organisation of classical theories of special functions, but also gives new discoveries of special functions, which will be pursued in a forthcoming work.

%    Bibliographies can be prepared with BibTeX using amsplain,
%    amsalpha, or (for "historical" overviews) natbib style.
\bibliographystyle{amsalpha}

\begin{thebibliography}{10}
\bibitem{Annaby-Mansour} M. H. Annaby and Z. S. Mansour, \textit{$q$-Taylor and interpolation series for Jackson $q$-difference operators}, J. Math. Anal. Appl. \textbf{344} (2008), 472--483.
\bibitem{BHMK} D. C. Barnett, R. G. Halburd, W. Morgan and R. J. Korhonen, \textit{Nevanlinna theory for the $q$-difference operator and meromorphic solutions of $q$-difference equations}, Proc. R. Soc. Edinb., Sect. A, Math. \textbf{137} (3) (2007), 457--474.
\bibitem{Beals_Wong_2016} R. Beals and R. Wong, \textit{Special Functions and Orthogonal Polynomials}, Cambridge Studies in Advanced Mathematics 153, Cambridge University Press, Cambridge, 2016.
\bibitem{Boole} G. Boole, \textit{On a general method in analysis}, Philos. Trans. R. Soc. London \textbf{134} (1844), 225--282.
\bibitem{Cheng-Chiang} K. H. Cheng and Y. M. Chiang, \textit{Nevanlinna theory of the Wilson divided-difference operator}, Ann. Acad. Sci. Fenn. Math. \textbf{42} (1) (2017), 175--209.
\bibitem{CCT} Y. M. Chiang, A. Ching and C. Y. Tsang, \textit{Series solutions of linear ODEs by Newton-Raphson method on quotient $D$-modules}, submitted, arXiv: 2111.05103.
\bibitem{Chiang-Feng3} Y. M. Chiang and S. J. Feng, \textit{Nevanlinna theory of the Askey-Wilson divided difference operator}, Adv. Math \textbf{329} (2018), 217--272.
\bibitem{Chu_Vu_2003} C.-H. Chu and T. G. Vu, \textit{A Liouville theorem for matrix-valued harmonic functions on nilpotent groups}, Bull. Lond. Math. Soc. \textbf{35} (5), (2003), 651--658.
\bibitem{deBranges} L. de Branges, \textit{Homogeneous and periodic spaces of entire functions}, {Duke Math. J.}, \textbf{29} (1962), 203--224.
\bibitem{deBranges_1968} L. de Branges, \textit{Hilbert Spaces of Entire Functions}, Prentice-Hall, Englewood Cliffs, 1968.
\bibitem{Etingof} P. Etingof, O. Golberg, S. Hensel, T. Liu, A. Schwendner, D. Vaintrob and E. Yudovina, \textit{Introduction to Representation Theory. With historical interludes by Slava Gerovitch}, Student Mathematical Library 59, American Mathematical Society, Providence, 2011.
\bibitem{Farina} A. Farina, \textit{Liouville-type theorems for elliptic problems}, in Handbook of Differential Equations: Stationary Partial Differential Equations. Vol. IV, 61--116, Elsevier/North-Holland, Amsterdam, 2007.
\bibitem{Gasper_Rahman} G. Gasper and M. Rahman, \textit{Basic hypergeometric series}, Encyclopedia of Mathematics and its Applications 35, Cambridge University Press, Cambridge, 1990.
\bibitem{Gelfond} A. O. Gelfond, \textit{Calculus of Finite Differences}, Hindustan Publishing Corp., Delhi, 1971.
\bibitem{Halburd-Korhonen} R. G. Halburd and R. J. Korhonen, \textit{Nevanlinna theory for the difference operator}, Ann. Acad. Sci. Fenn. Math. \textbf{31} (2) (2006), 463--478.
\bibitem{Hartshorne} R. Hartshorne, \textit{Local cohomology: a seminar given by A. Grothendieck, Harvard University, Fall 1961. Notes by R. Hartshorne}, Lecture Notes in Mathematics 41, Springer-Verlag, Berlin-Heidelberg-New York, 1967.
\bibitem{Ismail} M. E. H. Ismail, \textit{Classical and Quantum Orthogonal Polynomials in One Variable}, Encyclopedia of Mathematics and its Applications 98, Cambridge University Press, Cambridge, 2005.
\bibitem{Jackson} F. H. Jackson, \textit{$q$-form of Taylor's theorem}, Messenger Math. \textbf{39} (1909), 62--64.
\bibitem{Koekoek} R. Koekoek, P. A. Lesky and R. F. Swarttouw, \textit{Hypergeometric Orthogonal Polynomials and Their $q$-analogues}, Springer Monographs in Mathematics, Springer, Berlin, 2010.
\bibitem{Koppelman} E. Koppelman, \textit{The calculus of operations and the rise of abstract algebra}, Arch. Hist. Exact Sci. \textbf{8} (1971), 155--242.
\bibitem{Laita} L. M. Laita, \textit{The influence of Boole's search for a universal method in analysis on the creation of his logic}, Ann. Sci. \textbf{34} (1977), 163--176.
\bibitem{Mansour_Schork} T. Mansour and M. Schork, \textit{Commutation Relations, Normal Ordering, and Stirling Numbers}, Discrete Mathematics and its Applications, CRC Press, Boca Raton, 2016.
\bibitem{Noerlund} N. E. N\"{o}rlund, \textit{Le\c{c}ons sur les s\'{e}ries d'interpolation}, Gauthier-Villars, Paris, 1926.
\bibitem{Ramis} J.-P. Ramis, \textit{About the growth of entire functions solutions of linear algebraic $q$-difference equations}, Ann. Fac. Sci. Toulouse Math. \textbf{1} (1) (1992), 53--94.
\bibitem{Roman} S. Roman, \textit{The Umbral Calculus}, Pure and Applied Mathematics 111, Academic Press, London, 1984.
\bibitem{Synowiec} J. A. Synowiec, \textit{Some highlights in the development of algebraic analysis}, Algebraic analysis and related topics, Warsaw, 11--46, Banach Cent. Publ., Warsaw, 2000.
\bibitem{whittaker} J. M. Whittaker, \textit{Interpolatory Function Theory}, Cambridge University Press, Cambridge, 1935.
\end{thebibliography}
%    Insert the bibliography data here.

\smallskip
\smallskip
\smallskip
\smallskip
\smallskip
\smallskip
\smallskip

\begin{minipage}{0.31\textwidth}
%you mustn't put a blank-line break in here!
\begin{flushleft}
\begin{footnotesize}
Kam Hang Cheng\\ Department of Mathematics, \\ The Hong Kong University \\ of Science and Technology, \\ Clear Water Bay, \\ Kowloon, \\ Hong Kong SAR. \\
henry.cheng@family.ust.hk
\end{footnotesize}
\end{flushleft}
\end{minipage}
\noindent\begin{minipage}{0.31\textwidth}
\begin{flushleft}
\begin{footnotesize}
Yik Man Chiang\\ Department of Mathematics, \\ The Hong Kong University \\ of Science and Technology, \\ Clear Water Bay, \\ Kowloon, \\ Hong Kong SAR. \\
machiang@ust.hk
\end{footnotesize}
\end{flushleft} %you mustn't put a blank-line break in here!
\end{minipage}
\begin{minipage}{0.31\textwidth}
\begin{flushleft}
\begin{footnotesize}
Avery Ching\\ Department of Statistics, \\ The University of Warwick, \\ Coventry, \\ CV4 7AL, \\ United Kingdom. \\
Avery.Ching@warwick.ac.uk
\end{footnotesize}
\end{flushleft} %you mustn't put a blank-line break in here!
\end{minipage}
%you mustn't put a blank-line break in here!
\end{document}